\newcommand{\R}{{\mathbb{R}}}
\newcommand{\C}{{\mathbb{C}}}
\newcommand{\Z}{\mathbb{Z}}
\newcommand{\Q}{\mathbb{Q}}
\newcommand{\tmop}[1]{\ensuremath{\operatorname{#1}}}
\renewcommand{\Im}{\tmop{Im}}
\theoremstyle{plain}
\newtheorem{theorem}[equation]{Theorem}
\newtheorem{proposition}[equation]{Proposition}
\newtheorem{lemma}[equation]{Lemma}
\newtheorem{corollary}[equation]{Corollary}
\newtheorem{definition}[equation]{Definition}
\theoremstyle{remark}
\newtheorem{remark}[equation]{Remark}
\numberwithin{equation}{section}
\def\rmd{\mathrm{d}}
\newcommand{\Bp}{\mathbf{B}}
\newcommand{\Fo}{\mathbf{F}}
\newcommand{\U}{\mathbb{U}}
\newcommand{\Ht}{\mathbb{H}}
\newcommand{\D}{\mathbb{D}}
\newcommand{\inv}{\mathbb{I}^{\tau}(\mathbb{D}^2)}
\newcommand{\Sb}{\mathbb{G}}
\title[Bergman Projection]{$L^p$ regularity of the Bergman Projection on domains covered by the polydisk}
\author{Liwei Chen}
\address[Liwei Chen]{The Ohio State University, Department of
Mathematics, Columbus, OH 43210}
\email{chenliwei@wustl.edu}
\author{Steven G. Krantz}
\address[Steven G. Krantz]{Washington University in St Louis, Department of Mathematics and Statistics, St Louis, MO 63130}
\email{sk@math.wustl.edu}
\author{Yuan Yuan}
\address[Yuan Yuan]{Syracuse University, Department of Mathematics, Syracuse, NY 13244}
\email{yyuan05@syr.edu}
\subjclass[2010]{Primary: 32A25, Secondary: 32A36}
\thanks{The third author is supported by National Science Foundation grant DMS-1412384, Simons Foundation grant (\#429722 Yuan Yuan) and CUSE grant program at Syracuse University}
\keywords{Bergman Projection, Symmetrized Bidisk}
\begin{document}

\maketitle
\begin{abstract}
If a bounded domain can be covered by the polydisk through a rational proper holomorphic map, then the Bergman projection is $L^p$-bounded for $p$ in a certain range depending on the ramified rational covering. This result can be applied to the symmetrized polydisk and to the Hartogs triangle with exponent $\gamma$.
\end{abstract}

\section{Introduction}
For a bounded domain $\Omega$ in $\C^n$, denote the Bergman space by $A^2(\Omega)=L^2(\Omega)\cap\mathcal{O}(\Omega)$. The Bergman projection is the orthogonal projection $\Bp_{\Omega}:L^2(\Omega)\to A^2(\Omega)$. 
The mapping properties of the Bergman projection on $L^p$ spaces have been studied for many years.

In the late 1970s and early 1980s, people considered smoothly bounded domains with various convexity conditions on the boundary, see for example \cite{PhoSte77,NagRosSteWai89,McNSte94,CharpentierDupain06}. To show the $L^p$-boundedness, the general recipe is to construct a quasi-distance and control the Bergman kernel in terms of the quasi-distance and its derivatives. Considering the Bergman projection as an integral operator, one can prove the $L^p$-boundedness for the Bergman projection for $1<p<\infty$. However, Barrett in \cite{Bar84,Bar92} discovered that there are smooth domains on which the Bergman projection behaves irregularly on $L^p$ spaces.

Later in the 21st century, people also discovered that the $L^p$-regularity of the Bergman projection has degenerate $p$ range, when considering non-smooth domains, see for example \cite{LanSte04,KraPel,Zey13,Chen14,ChaZey16,EdhMcN16,Huo18}. In particular, the boundary geometry of these non-smooth domains plays an essential role. While in \cite{LanSte04} Lanzani and Stein focus on simply connected planar domains and show that the $p$ ranges are certain intervals depending on the regularity of the boundary of the domain, it is a different story when one considers higher-dimensional, non-smooth domains---the $p$ range can even degenerate to the singleton $\{2\}$ (cf. \cite{Zey13,ChenZey,EdhMcN17}). 
What kind of geometry forces such a degeneracy of the $p$ range is still a mystery.

In this article, a certain class of domains in $\C^n$ is considered. Namely, a class of bounded domains that can be covered by the polydisk $\D^n$ through a rational proper holomorphic map. It is shown that these domains are of the first type: the $p$ range is always an interval with conjugate exponent endpoints (cf. Theorem \ref{maintheorem} in \S\ref{S:main}). It should be emphasized that the property of being covered by $\D^n$ through a 
rational proper holomorphic map is a geometric property of the domain, whereas $L^p$-regularity of the Bergman projection for a certain range of $p$ is an analytic property of the function spaces on the domain.

The idea of the proof is based on the Bergman projections transform in \cite{Bell81} and an application of the result in \cite{LanSte04}. The Bergman projection on the base domain is pulled back to the polydisk $\D^n$, and then is transferred to the product of upper half planes. From there, the $L^p$-regularity is reduced to a weighted integral inequality (see \eqref{weightedLponUn} in \S\ref{S:main}). By the basic facts of the class $A_p^+$ (see \S\ref{Apclass} for the definition of the class $A^+_p$), the weighted integral inequality is proved by showing that
the weight belongs to the class $A_p^+$. This powerful technique was first introduced by Lanzani and Stein in \cite{LanSte04} in one-variable. Their technique is applied to the higher dimensional case in this article. Here, the covering map being rational plays an important role. By the fundamental theorem of algebra and the factorization property (cf. Lemma \ref{ha}), it suffices to verify that each factor of the weight is in the class $A^+_p$ (see \S\ref{S:main} for details).

In the past 20 years, the symmetrized bidisk
\[
\Sb=\{(z_1+z_2,z_1z_2)\in\C^2\,|\,(z_1,z_2)\in\D\times\D\}
\]
has been studied intensively by the functional analysts (see for example \cite{AglYou00,AglYou04,AglLykYou18}). It is natural to ask what the Bergman theory on the symmetrized bidisk $\Sb$ is. Note that the symmetrized bidisk has the structure ``$z_1+z_2$", which crosses the two components of $\D^2$. So the Bergman theory on $\Sb$ cannot simply reduce to the ``one-variable" problem as on $\D^2$. However, we shall see 
in \S\ref{sympoly} that $\Sb$ can be covered by $\D^2$ through a rational proper holomorphic map. Indeed, symmetrized polydisk, the $n$-dimensional generalization of $\Sb$ is considered there. By employing the fundamental idea developed by Lanzani and Stein in \cite{LanSte04} and its generalization (cf. \S\ref{S:main}), the $L^p$ boundedness for the Bergman projection on the $n$-dimensional symmetrized polydisk is obtained. Moreover, as an example, under this ``covering mapping method", the largest possible interval for $p$ so that the Bergman projection is $L^p$-bounded has been computed for the symmetrized polydisks (cf. Theorem \ref{nsymdisk} in \S\ref{sympoly}).

Recently, Edholm and McNeal considered the Hartogs triangle 
\[
\Ht^{\gamma}=\{(z_1,z_2)\in\C^2\,:\,|z_1|^{\gamma}<|z_2|<1\}
\]
with exponent $\gamma\in\R^+$ in \cite{EdhMcN16, EdhMcN17}, where they call them ``fat Hartogs triangles''. 
It is shown in \S\ref{Hartogstriangle} that, when $\gamma$ is rational, $\Ht^{\gamma}$ can be covered by $\D\times\D^*$ through a rational proper holomorphic map, where $\D$ is the unit disk and $\D^*=\D\setminus\{0\}$. Since the Bergman spaces $A^2(\D)$ and $A^2(\D^*)$ are the same, our main result (Theorem \ref{maintheorem}) also applies. This is consistent with the result in \cite{EdhMcN16}. Edholm and McNeal gave a sharp range of $p$ there. On the other hand, when $\gamma$ is irrational, Edholm and McNeal showed in \cite{EdhMcN17} that the Bergman projection is $L^p$-bounded only if $p=2$. Combining this result with our main theorem, one can derive an interesting fact (Corollary \ref{nopropercover} in \S\ref{Hartogstriangle}) about the geometric mapping property of $\Ht^{\gamma}$---the Hartogs triangle with irrational exponent cannot be covered by $\D\times\D^*$ through a rational proper holomorphic mapping.

In addition to the $L^p$-regularity of the Bergman projection on $\Sb$, the mapping properties 
of the Friedrichs operator on $\Sb$ are also considered in this article (see \S\ref{MapProFriedrichs} for the definition of the Friedrichs operator and its relation with the Bergman projection). The Friedrichs operator is first introduced in \cite{Friedrichs}, and has been studied on planar domains in \cite{ShapiroUnbddQuad,ShapiroBook,PutinarShapiro1,PutinarShapiro2}. It is well-known that a planar domain is a quadrature domain if and only if its Friedrichs operator is of finite rank. In particular, if it is of rank one, then the domain is the unit disk $\D$ (cf. for example \cite{PutinarShapiro1}). Recently, the Friedrichs operator has been studied on higher dimensional domains. It is noticed that the Friedrichs operator possesses different types of smoothing properties (cf. \cite{HerMcN10,HerMcNStr,RavZey,ChenZey18}). In particular, Ravisankar and Zeytuncu consider some holomorphic extension properties
of the Friedrichs operator on higher dimensional domains with some rotational symmetry in \cite{RavZey}. Namely, every output function under the Friedrichs operator has a holomorphic extension on a larger domain. It is natural to ask whether it is because of the rotational symmetry of the domain that the Friedrichs operator possesses this smoothing property. However, the symmetrized bidisk is a counterexample to this 
question---it lacks rotational symmetry but its Friedrichs operator is of rank one (cf. Proposition \ref{notHartogs} and Theorem \ref{Friedrichsrank1} in \S\ref{MapProFriedrichs}). This suggests that a symmetric proper covering from $\D^2$ can probably do the job as well.

The article is organized as follows. In \S\ref{Apclass}, some basic facts about the $A^+_p$ class are proved. In \S\ref{S:main}, the main result is stated and is proved. The applications to symmetrized polydisks and to Hartogs triangles with exponent are considered in \S\ref{sympoly} and \S\ref{Hartogstriangle} respectively. The Bergman space on $\Sb$ and the corresponding Friedrichs operator are studied in \S\ref{Friedrichs}.


\section{Analysis of the Class $A_p^+$}
\label{Apclass}

Let $\U$ be the upper half plane and let $\rmd A$ denotes the standard Euclidean area measure in $\C$. 
\begin{definition} \rm
For $1<p<\infty$, a weight $\mu>0$ belongs to the class $A_p^+(\U)$ if there exists $C>0$, such that 
\begin{equation}\label{A}
N_D(\mu):=\left(\frac{1}{\pi R^2}\int_{D\cap \U} \mu(z) \rmd A(z)\right) \cdot \left(\frac{1}{\pi R^2}\int_{D\cap \U} \mu(z)^{-\frac{q}{p}}\rmd A(z) \right)^{\frac{p}{q}} \leq C
\end{equation} for any disk $D = D(x, R)= \{z \in \mathbb{C}: |z-x| < R, x\in \mathbb{R}\}$ centered at a point on the $x$-axis, where $\frac{1}{p} + \frac{1}{q} =1$.
\end{definition}

\begin{lemma}
\label{coeff}
Let $1 < p < \infty$.  For $w\in\C$, if $\mu\in A_p^+(\U)$, then $w\mu\in A_p^+(\U)$ with upper bound independent of $w$, i.e., $N_D(w \mu)$ is bounded from above by a uniform constant independent of $w$ and $D$.
\end{lemma}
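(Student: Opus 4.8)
The plan is to prove this by a direct computation from the definition \eqref{A}, exploiting the fact that the two integral factors in $N_D$ are weighted by conjugate exponents. Since a weight must be positive, the only feature of the complex number $w$ that enters \eqref{A} is its modulus; accordingly I would first reduce to the case of a positive real scalar $c=|w|$ (the case $w=0$ being degenerate and excluded), and then show that $N_D(c\mu)=N_D(\mu)$ for \emph{every} disk $D=D(x,R)$ centred on the real axis.

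The computation itself is short. Pulling the constant $c$ out of each integral in \eqref{A} gives a factor $c$ from the first integral and a factor $c^{-q/p}$ from the second, where the second factor is then raised to the power $p/q$. Because $\tfrac1p+\tfrac1q=1$, the exponents satisfy $\tfrac{q}{p}\cdot\tfrac{p}{q}=1$, so the second contribution is exactly $c^{-1}$. Multiplying, I would obtain
\[
N_D(c\mu)=c\cdot c^{-1}\,N_D(\mu)=N_D(\mu),
\]
independently of $D$. Taking the supremum over all admissible disks, the $A_p^+(\U)$ constant of $c\mu$ equals that of $\mu$; hence if $\mu\in A_p^+(\U)$ with constant $C$ in \eqref{A}, then $w\mu\in A_p^+(\U)$ with the same constant $C$, which in particular does not depend on $w$.

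I do not expect any genuine obstacle here: the statement is an instance of the elementary homogeneity of Muckenhoupt-type conditions under multiplication by constants, and the whole content is the exponent bookkeeping that makes the two scaling factors cancel. The only points requiring a word of care are (i) interpreting the complex scalar through $|w|$ so that $w\mu$ is a bona fide positive weight, and (ii) checking that the normalizing factors $1/(\pi R^2)$ play no role in the cancellation, since they are common to both $\mu$ and $c\mu$. This invariance is precisely what will later let us discard the (possibly complex) leading coefficient produced when the rational weight is factored via the fundamental theorem of algebra, reducing the verification to the individual linear factors as in Lemma~\ref{ha}.
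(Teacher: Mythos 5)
Your proposal is correct and follows essentially the same route as the paper: pull the scalar out of both integrals and observe that the exponents $\tfrac{q}{p}\cdot\tfrac{p}{q}=1$ make the two factors cancel, so $N_D(w\mu)=N_D(\mu)$. Your explicit reduction to $c=|w|$ is in fact slightly more careful than the paper's proof, which manipulates the complex scalar $w$ directly even though a weight must be positive.
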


\begin{proof}
The conclusion is trivial if $w=0$. Assume $w\neq0$. Since
\[
\frac{1}{\pi R^2}\int_{D\cap \U} w\mu(z) \rmd A(z)=\frac{w}{\pi R^2}\int_{D\cap \U} \mu(z) \rmd A(z)
\]
and
\[
\left(\frac{1}{\pi R^2}\int_{D\cap \U} \big(w\mu(z)\big)^{-\frac{q}{p}}\rmd A(z) \right)^{\frac{p}{q}}=w^{-1}\left(\frac{1}{\pi R^2}\int_{D\cap \U} \mu(z)^{-\frac{q}{p}}\rmd A(z) \right)^{\frac{p}{q}}
\]
for any $D = D(x, R)$ with $x\in\R$, \eqref{A} is verified by
\[
N_D(w\mu)=N_D(\mu)\le C.
\]
\end{proof}

\begin{lemma}\label{ha} \sl
Let $1 < p < \infty$.  If $\mu_j \in A^+_p(\U)$ for $j=1, 2$, then $\mu_1^\theta \mu_2^{1-\theta} \in A^+_p(\U)$ for any $\theta \in [0, 1]$.
\end{lemma}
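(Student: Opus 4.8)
The plan is to show that the $A_p^+(\U)$ constant of the interpolated weight is dominated by a weighted geometric mean of the constants of $\mu_1$ and $\mu_2$, using two applications of Hölder's inequality followed by a regrouping of exponents; this is the usual log-convexity argument for Muckenhoupt-type weights, adapted to the restricted family of disks $D=D(x,R)$ with $x\in\R$ that appears in \eqref{A}. The endpoints $\theta\in\{0,1\}$ are immediate, since then $\mu_1^\theta\mu_2^{1-\theta}$ equals $\mu_2$ or $\mu_1$ and there is nothing to prove. So I may assume $\theta\in(0,1)$ and set $\mu=\mu_1^\theta\mu_2^{1-\theta}$.

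First I would estimate the averaging factor in \eqref{A}. Writing $\mu=\mu_1^\theta\mu_2^{1-\theta}$ and applying Hölder's inequality on $D\cap\U$ with the conjugate exponents $1/\theta$ and $1/(1-\theta)$, I obtain
\[
\frac{1}{\pi R^2}\int_{D\cap\U}\mu\,\rmd A\le\left(\frac{1}{\pi R^2}\int_{D\cap\U}\mu_1\,\rmd A\right)^{\theta}\left(\frac{1}{\pi R^2}\int_{D\cap\U}\mu_2\,\rmd A\right)^{1-\theta}.
\]
Next I would handle the second factor. Since $\mu^{-q/p}=(\mu_1^{-q/p})^{\theta}(\mu_2^{-q/p})^{1-\theta}$, the same Hölder inequality with exponents $1/\theta$ and $1/(1-\theta)$, followed by raising to the power $p/q$, gives
\[
\left(\frac{1}{\pi R^2}\int_{D\cap\U}\mu^{-q/p}\,\rmd A\right)^{p/q}\le\left(\frac{1}{\pi R^2}\int_{D\cap\U}\mu_1^{-q/p}\,\rmd A\right)^{\theta p/q}\left(\frac{1}{\pi R^2}\int_{D\cap\U}\mu_2^{-q/p}\,\rmd A\right)^{(1-\theta)p/q}.
\]

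The decisive step is to multiply the two displayed estimates and regroup the resulting four factors according to the exponents $\theta$ and $1-\theta$. After regrouping, the factors reassemble into $N_D(\mu_1)^{\theta}N_D(\mu_2)^{1-\theta}$, so that $N_D(\mu)\le N_D(\mu_1)^{\theta}N_D(\mu_2)^{1-\theta}\le C_1^{\theta}C_2^{1-\theta}$, where $C_1,C_2$ are the $A_p^+(\U)$ constants of $\mu_1,\mu_2$. Since this bound is independent of the disk $D$, it shows $\mu\in A_p^+(\U)$. The only point requiring care is this bookkeeping of exponents---in particular verifying that the $p/q$ powers appearing in the second factor pair correctly with the first factor so that each weight $\mu_j$ recombines into its own quantity $N_D(\mu_j)$---rather than any genuine analytic obstacle; the argument is otherwise entirely formal once the two Hölder inequalities are in place.
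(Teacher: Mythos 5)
Your proposal is correct and follows essentially the same route as the paper: Hölder's inequality with conjugate exponents $1/\theta$ and $1/(1-\theta)$ applied to both averaging factors, followed by multiplying and regrouping to obtain $N_D(\mu_1^\theta\mu_2^{1-\theta})\le N_D(\mu_1)^\theta N_D(\mu_2)^{1-\theta}$. The exponent bookkeeping you flag does work out exactly as you describe, so there is no gap.
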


\begin{proof}
There is nothing to prove if $\theta=0,1$. So assume $\theta\in(0,1)$. Let $r=1/\theta$. If $1/r+1/r'=1$, then $r'=1/(1-\theta)$. Let $q$ be the conjugate exponent of $p$. For any disk $D$ as in \eqref{A}, applying H\"{o}lder's inequality, one obtains
\begin{equation}
\label{eq1}
\begin{split}
\frac{1}{\pi R^2}\int_{D\cap \U} \mu_1^{\theta}(z)\mu_2^{1-\theta}(z) \rmd A(z)\le
& \left(\frac{1}{\pi R^2}\int_{D\cap \U} \mu_1(z) \rmd A(z)\right)^{1/r}\cdot\\
& \qquad\qquad\left(\frac{1}{\pi R^2}\int_{D\cap \U} \mu_2(z) \rmd A(z)\right)^{1/r'}
\end{split}
\end{equation}
and
\begin{equation}
\label{eq2}
\begin{split}
\left(\frac{1}{\pi R^2}\int_{D\cap \U} [\mu_1^{\theta}(z)\mu_2^{1-\theta}(z)]^{-\frac{q}{p}}\rmd A(z) \right)^{\frac{p}{q}}\le
& \left(\frac{1}{\pi R^2}\int_{D\cap \U} \mu_1(z)^{-\frac{q}{p}} \rmd A(z)\right)^{\frac{1}{r}\cdot\frac{p}{q}}\cdot\\
& \qquad\qquad\left(\frac{1}{\pi R^2}\int_{D\cap \U} \mu_2(z)^{-\frac{q}{p}} \rmd A(z)\right)^{\frac{1}{r'}\cdot\frac{p}{q}}.
\end{split}
\end{equation}
Since $\mu_1,\mu_2\in A^+_p(\U)$, $N_D(\mu_1),N_D(\mu_2)\le C$ for some $C>0$. Multiplying \eqref{eq1} and \eqref{eq2}, one obtains
\[
N_D(\mu_1^{\theta}\mu_2^{1-\theta})\le N_D(\mu_1)^{1/r}N_D(\mu_2)^{1/r'}\le C.
\]
Since $D$ is arbitrary, this completes the proof.
\end{proof}

\begin{proposition}\label{mu1}	     \sl
Let $\mu(z)=\left|z-w\right|^{\frac{\alpha(2-p)}{\theta}}$ be a weight on $\U$, where $\theta\in(0,1)$, $\alpha>0$, $p\in(1,\infty)$, and $w\in\C$. If $p \in \left(\frac{2\alpha+2\theta}{\alpha+2\theta}, \frac{2\alpha+2\theta}{\alpha}\right)$, then $\mu\in A^+_p(\U)$ with an upper bound independent of $w$, i.e. $N_D(\mu)$ is bounded from above by a uniform constant independent of $w$ and $D$.
\end{proposition}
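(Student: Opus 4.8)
The plan is to reduce the whole statement to a single, exponent-free estimate for power-weight integrals over half-disks, exploiting the fact that the two exponents produced by the $A_p^+$ condition are matched.

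Write $\beta=\frac{\alpha(2-p)}{\theta}$, so that $\mu(z)=|z-w|^{\beta}$. Since the conjugate exponent satisfies $q/p=1/(p-1)$ and $p/q=p-1$, the quantity in \eqref{A} factors as
\[
N_D(\mu)=\left(\frac{1}{\pi R^2}\int_{D\cap\U}|z-w|^{\beta}\,\rmd A\right)\left(\frac{1}{\pi R^2}\int_{D\cap\U}|z-w|^{-\frac{\beta}{p-1}}\,\rmd A\right)^{p-1}.
\]
Call the first factor $A$ and the second $B^{p-1}$. First I would record a short arithmetic fact: the hypothesis $p\in\big(\frac{2\alpha+2\theta}{\alpha+2\theta},\frac{2\alpha+2\theta}{\alpha}\big)$ is \emph{equivalent} to the pair of inequalities $\beta>-2$ and $\beta<2(p-1)$; the former rearranges to $p<\frac{2\alpha+2\theta}{\alpha}$ and the latter to $p>\frac{2\alpha+2\theta}{\alpha+2\theta}$. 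Since $-\frac{\beta}{p-1}>-2$ is the same as $\beta<2(p-1)$, both exponents appearing in $A$ and $B$ exceed $-2$, which is precisely the threshold for local area-integrability of a radial power weight in $\C$.

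The heart of the matter is the following uniform estimate: for every $\gamma>-2$ there is a constant $C_\gamma$, depending only on $\gamma$, so that for every disk $D=D(x,R)$ with $x\in\R$ and every $w\in\C$,
\[
\frac{1}{\pi R^2}\int_{D\cap\U}|z-w|^{\gamma}\,\rmd A(z)\le C_\gamma\,\ell^{\gamma},\qquad \ell:=R+\mathrm{dist}\big(w,\overline{D\cap\U}\big).
\]
I would prove this by a dichotomy on the location of $w$. In the far regime $\mathrm{dist}(w,\overline{D\cap\U})\ge R$, every $z\in D\cap\U$ satisfies $\mathrm{dist}(w,\overline{D\cap\U})\le|z-w|\le\mathrm{dist}(w,\overline{D\cap\U})+2R$, so $|z-w|$ is comparable to $\ell$ and $|z-w|^{\gamma}\le C_\gamma\ell^{\gamma}$ pointwise for either sign of $\gamma$; multiplying by $|D\cap\U|=\pi R^2/2$ gives the claim. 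In the near regime $\mathrm{dist}(w,\overline{D\cap\U})<R$ one has $\ell\approx R$ and $D\cap\U\subset B(w,3R)$; for $\gamma\ge0$ the pointwise bound $|z-w|\le 3R$ suffices, while for $-2<\gamma<0$ one enlarges the domain to the full ball and passes to polar coordinates,
\[
\int_{B(w,3R)}|z-w|^{\gamma}\,\rmd A=2\pi\int_0^{3R}r^{\gamma+1}\,\rmd r=\frac{2\pi(3R)^{\gamma+2}}{\gamma+2},
\]
which is finite precisely because $\gamma>-2$ and yields $\frac{1}{\pi R^2}\int_{D\cap\U}|z-w|^{\gamma}\,\rmd A\le C_\gamma R^{\gamma}\approx C_\gamma\ell^{\gamma}$.

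With this estimate I apply it twice: once with $\gamma=\beta$ to get $A\le C_\beta\,\ell^{\beta}$, and once with $\gamma=-\frac{\beta}{p-1}$ (legitimate, since this exponent is $>-2$ in the stated range) to get $B\le C'\,\ell^{-\beta/(p-1)}$, hence $B^{p-1}\le (C')^{p-1}\ell^{-\beta}$. Because the scale $\ell=\ell(w,D)$ is identical in both factors, the powers of $\ell$ cancel and
\[
N_D(\mu)\le C_\beta\,(C')^{p-1}\,\ell^{\beta}\ell^{-\beta}=C_\beta\,(C')^{p-1},
\]
a constant depending only on $\alpha,\theta,p$ and independent of $w$ and $D$, as required. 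I expect the main obstacle to be the geometric bookkeeping behind the key estimate when $w$ lies off the real axis—especially in the lower half plane—where one must verify that replacing the half-disk $D\cap\U$ by a ball centered at $w$ loses nothing in the near regime; the integrability threshold $\gamma>-2$ is exactly what the stated $p$-range supplies. The clean point, by contrast, is that the matched exponents in the $A_p^+$ condition force the common scale $\ell$ to cancel, reducing the entire proposition to one one-sided power-weight estimate.
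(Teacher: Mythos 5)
Your proof is correct and follows essentially the same route as the paper's: a dichotomy between $w$ far from the half-disk (where $|z-w|$ is comparable to a single scale and the two factors of $N_D(\mu)$ cancel) and $w$ near it (where one enlarges to a ball centered at $w$ and integrates in polar coordinates, the integrability threshold $\gamma>-2$ being exactly the stated $p$-range). Your packaging of both cases into one uniform estimate with the common scale $\ell$ is a slightly cleaner presentation of the same computation, and your translation of the two exponent conditions into the interval $\bigl(\frac{2\alpha+2\theta}{\alpha+2\theta},\frac{2\alpha+2\theta}{\alpha}\bigr)$ matches the paper's.
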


\begin{proof}
The inequality \eqref{A} will be proved for different types of disks $D=D(x, R)$, where $x \in \mathbb{R}$ and $R>0$. Let $d(x, w) = L$.

Now assume that  $L \geq 10R$. If $z\in D$, then $ L-R \leq  |z-w| \leq L+R $. It follows from the definition that
\[
N_D(\mu)  \leq \left(\frac{1}{2}\right)^{\frac{p+q}{q}}\frac{\max_{z \in D}\mu(z)}{\min_{z \in D}\mu(z)} \leq \left(\frac{1}{2}\right)^{\frac{p+q}{q}}\left(\frac{L+R}{L-R}\right)^{\frac{\alpha|2-p|}{\theta}} \leq \left(\frac{1}{2}\right)^{\frac{p+q}{q}}\left( \frac{11}{9}\right)^{\frac{\alpha|2-p|}{\theta}}
\]
for any given $w\in\C$.

Assume that $L < 10R$. Let $D'=D(w, 20R)$. Then $D\subset D'$. Therefore
\[
\int_{D \cap \U} \mu(z) \rmd A(z) \leq \int_{D'}\mu(z) \rmd A(z)
\]
and 
\[
\int_{D \cap \U} \mu(z)^{-\frac{q}{p}} \rmd A(z) \leq \int_{D'}\mu(z)^{-\frac{q}{p}} \rmd A(z).
\]
On the other hand, 
\[
\int_{D'} \mu(z) \rmd A(z) = 2\pi \int^{20R}_0 r^{\frac{\alpha(2-p)}{\theta}} r dr = \frac{2\pi}{2+\frac{\alpha(2-p)}{\theta}} \left(20R \right)^{2+\frac{\alpha(2-p)}{\theta}}
\]
provided $2+\frac{\alpha(2-p)}{\theta} >0$ and  
\[
\int_{D'} \mu(z)^{-\frac{q}{p}}  \rmd A(z) = 2\pi \int^{20R}_0 r^{-\frac{\alpha(2-p)q}{p\theta}} r dr = \frac{2\pi}{2-\frac{\alpha(2-p)q}{p\theta}} \left(20R \right)^{2-\frac{\alpha(2-p)q}{p\theta}}
\]
provided $2-\frac{\alpha(2-p)q}{p\theta} >0$. Therefore, when $2+\frac{\alpha(2-p)}{\theta} >0$ and $2-\frac{\alpha(2-p)q}{p\theta} >0$, 
\begin{equation}\notag
\begin{split}
N_D(\mu) &\leq \frac{1}{(\pi R^2)^{1+\frac{p}{q}}}\left(\int_{D'}\mu(z) \rmd A(z) \right)\cdot \left( \int_{D'}\mu(z)^{-\frac{q}{p}} \rmd A(z)\right)^{\frac{p}{q}} \\
&\leq \frac{1}{\pi^\frac{p+q}{q}}\frac{2\pi}{2+\frac{\alpha(2-p)}{\theta}} \cdot\frac{2\pi}{2-\frac{\alpha(2-p)q}{p\theta}} \cdot 20^{2(1+\frac{p}{q}) } R^{2+\frac{\alpha(2-p)}{\theta} -2} R^{(2-\frac{\alpha(2-p)q}{p\theta}-2)\frac{p}{q}} \\
&=\frac{1}{\pi^\frac{p+q}{q}}\frac{2\pi}{2+\frac{\alpha(2-p)}{\theta}}\cdot \frac{2\pi}{2-\frac{\alpha(2-p)q}{p\theta}} \cdot 20^{2(1+\frac{p}{q}) }.
\end{split}
\end{equation}
Combining $2+\frac{\alpha(2-p)}{\theta} >0$ and $2-\frac{\alpha(2-p)q}{p\theta} >0$ with $\frac{1}{p} + \frac{1}{q} =1$, one sees that $p \in \left(\frac{2\alpha+2\theta}{\alpha+2\theta}, \frac{2\alpha+2\theta}{\alpha}\right)$. This completes the proof.
\end{proof}

\begin{proposition}\label{mu2} \sl
Let $\mu(z)=\left|z-w\right|^{\frac{-\beta(2-p)}{\sigma}}$ be a weight on $\U$, where $\sigma\in(0,1)$, $\beta>2\sigma$, $p\in(1,\infty)$, and $w\in\C$. If $p \in \left(\frac{2\beta-2\sigma}{\beta}, \frac{2\beta-2\sigma}{\beta-2\sigma}\right)$, then $\mu\in A^+_p(\U)$ with a bound independent of $w$.
\end{proposition}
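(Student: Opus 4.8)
The plan is to follow the proof of Proposition \ref{mu1} almost verbatim, since the present weight is again a power of $|z-w|$ and the argument is structurally identical once the sign of the exponent is tracked. Writing the exponent as $E=\frac{-\beta(2-p)}{\sigma}$ (in place of the $\frac{\alpha(2-p)}{\theta}$ of Proposition \ref{mu1}), I would split into two regimes according to the size of $L=d(x,w)$, the distance from the center of $D=D(x,R)$ to the singularity $w$, relative to the radius $R$.

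In the far case $L\geq 10R$, every $z\in D$ satisfies $L-R\leq|z-w|\leq L+R$, so $\mu$ oscillates across $D$ by at most a bounded factor. Since the center lies on $\R$, the half-disk $D\cap\U$ has area $\tfrac12\pi R^2$, and estimating each average in \eqref{A} by the maximum (respectively minimum) of $\mu$ on $D$ gives $N_D(\mu)\leq\left(\tfrac12\right)^{\frac{p+q}{q}}\frac{\max_D\mu}{\min_D\mu}\leq\left(\tfrac12\right)^{\frac{p+q}{q}}\left(\tfrac{11}{9}\right)^{\frac{\beta|2-p|}{\sigma}}$, a bound independent of $w$ and $R$.

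In the near case $L<10R$ one has $D\subset D'=D(w,20R)$, so I would dominate both averages in \eqref{A} by the corresponding integrals over $D'$ and evaluate these in polar coordinates centered at $w$. This yields $\int_{D'}\mu\,\rmd A=\frac{2\pi}{2+E}(20R)^{2+E}$ provided $2+E>0$, and $\int_{D'}\mu^{-q/p}\,\rmd A=\frac{2\pi}{2-\frac{q}{p}E}(20R)^{2-\frac{q}{p}E}$ provided $2-\frac{q}{p}E>0$. A short bookkeeping of exponents shows that the powers of $R$ cancel exactly against $(\pi R^2)^{1+p/q}$, leaving a constant depending only on $p,q,\beta,\sigma$ and independent of $w$ and $R$.

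The one genuinely substantive step is to verify that these two convergence conditions are equivalent to the advertised interval. The condition $2+E>0$ unwinds directly to $p>\frac{2\beta-2\sigma}{\beta}$, the left endpoint. For the other, clearing denominators in $2-\frac{q}{p}E>0$ and using $\tfrac1p+\tfrac1q=1$ reduces it to $p(2\sigma-\beta)>2\sigma-2\beta$; this is precisely where the hypothesis $\beta>2\sigma$ is indispensable, for then $2\sigma-\beta<0$ and dividing flips the inequality to $p<\frac{2\beta-2\sigma}{\beta-2\sigma}$, the right endpoint. The same hypothesis also guarantees $\frac{2\beta-2\sigma}{\beta}<\frac{2\beta-2\sigma}{\beta-2\sigma}$, so the interval is nonempty. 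I expect this sign analysis, rather than any of the integral evaluations, to be the only place where care is genuinely required.
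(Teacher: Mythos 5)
Your proposal is correct and follows the same route as the paper, which itself just cites the argument of Proposition \ref{mu1} and records the two convergence conditions $2+\frac{-\beta(2-p)}{\sigma}>0$ and $2-\frac{-\beta(2-p)q}{p\sigma}>0$; your sign analysis showing these unwind to the stated interval (with $\beta>2\sigma$ used exactly where you say) matches the paper's intent and fills in the details it omits.
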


\begin{proof}
By a similar argument as in the proof of Proposition \ref{mu1}, one can prove that $\mu \in A^+_p(\U)$ with a bound independent of $w$ if $2+\frac{-\beta(2-p)}{\sigma} >0$ and $2-\frac{-\beta(2-p)q}{p\sigma}>0$. When $\beta>2\sigma>0$, one can derive $p \in \left(\frac{2\beta-2\sigma}{\beta}, \frac{2\beta-2\sigma}{\beta-2\sigma}\right)$.
\end{proof}

For the later application, we state the result \cite[Proposition 4.5]{LanSte04} at the end of this section. For a proof, see \cite[\S 4]{LanSte04} for details.

\begin{theorem}[Lanzani-Stein 04]
\label{LS04prop4.5}
Suppose that $1<p<\infty$. Let $\Bp_{\U}$ be the Bergman projection on $\U$ and $\mu$ be a weight on $\U$. Then $\Bp_{\U}$ is bounded on $L^p(\U,\mu)$ if and only if $\mu\in A^+_p(\U)$. Here $L^p(\U,\mu)$ is the 
space consisting all measurable functions $f$ on $\U$ such that
\[
\|f\|_{L^p(\U,\mu)}:=\left(\int_{\U}|f(z)|^p\mu(z)\rmd A(z)\right)^{1/p}<\infty.
\]
\end{theorem}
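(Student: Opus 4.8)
The statement is the model weighted inequality on which the rest of the paper rests, so although it is quoted from \cite{LanSte04}, let me indicate the route I would take. I would prove the two implications separately, using throughout that the Bergman kernel of $\U$ is $K_{\U}(z,w)=-\tfrac{1}{\pi}(z-\bar w)^{-2}$, so that
\[
\Bp_{\U}f(z)=-\frac{1}{\pi}\int_{\U}\frac{f(w)}{(z-\bar w)^2}\,\rmd A(w),
\qquad |\Bp_{\U}f(z)|\le P^+|f|(z):=\frac{1}{\pi}\int_{\U}\frac{|f(w)|}{|z-\bar w|^2}\,\rmd A(w).
\]
The geometry to keep in mind is that for $z=x+iy$ and $w=u+iv$ in $\U$ one has $|z-\bar w|\approx |x-u|+y+v$, so the kernel is singular only as both points approach a common boundary point, and the natural ``balls'' are exactly the boundary-centered disks $D(x,R)$ appearing in \eqref{A}.

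For necessity I would test $\Bp_{\U}$ against $f=\mu^{-q/p}\chi_{D\cap\U}$ for a fixed disk $D=D(x,R)$ centered on $\R$. Over the Carleson box above $D$ (the points $z$ with $|z-x|\lesssim R$ and $\Im z\approx R$, a region of area $\approx R^2$) the argument of $(z-\bar w)^{-2}$ varies by less than a half-turn as $w$ ranges over $D\cap\U$, so there is no cancellation and $|\Bp_{\U}f(z)|\gtrsim R^{-2}\int_{D\cap\U}\mu^{-q/p}$ there. Inserting this lower bound into $\|\Bp_{\U}f\|_{L^p(\U,\mu)}\lesssim\|f\|_{L^p(\U,\mu)}$, and using that $|f|^p\mu=\mu^{-q/p}$, yields after dividing by $\int_{D\cap\U}\mu^{-q/p}$ and recalling $p-1=p/q$ exactly the bound $N_D(\mu)\le C$ of \eqref{A}. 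The only point needing care is the non-oscillation claim, which is where the explicit form of the kernel enters.

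For sufficiency I would bound $P^+$ on $L^p(\U,\mu)$ and conclude by the pointwise domination above. Decomposing $\U$ into Whitney boxes relative to $\R$ and grouping $w$ according to the dyadic size of $|z-\bar w|$, the estimate $|z-\bar w|\approx|x-u|+y+v$ rewrites $P^+$ as a positive sum of averages of $|f|$ over the boundary-centered disks $D(x,2^kR)\cap\U$. One then hopes to read the weighted bound off \eqref{A} via the Muckenhoupt theory for the averaging family of Carleson boxes (the normalizations match, since $|D\cap\U|=\tfrac12\pi R^2$ for $D$ centered on $\R$).

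The main obstacle is precisely that $P^+$ sits at the critical exponent: the positive operator is \emph{not} pointwise dominated by the maximal function over boundary disks---summing the dyadic averages produces a logarithmic divergence, already visible in $P^+1\equiv\infty$---so one cannot simply invoke a weighted maximal inequality. The divergence must be absorbed by localizing the interaction along the tree of Carleson boxes: either one extracts, via a stopping-time argument in which the average of $|f|$ roughly doubles, a sparse subfamily for which the averages decay geometrically and the $A^+_p$ bound for the resulting sparse sum is then routine, or, equivalently, one uses the reverse-H\"older/openness self-improvement of the class to upgrade $\mu\in A^+_p$ to $\mu\in A^+_{p-\e}$ and converts the borderline $\e$ of slack into geometric decay. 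Establishing this self-improvement for the one-sided class $A^+_p(\U)$---whose averaging condition only sees disks centered on the boundary---is the technical heart of the argument, after which the weighted $L^p$ estimate for $P^+$, and hence Theorem~\ref{LS04prop4.5}, follows.
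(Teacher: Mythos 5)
First, a point of context: the paper does not prove Theorem~\ref{LS04prop4.5} at all --- it is imported from \cite{LanSte04} with the proof explicitly deferred to \S 4 of that paper, so the comparison here is with the cited source rather than with an in-paper argument. Your outline (pointwise domination of $\Bp_{\U}$ by the positive operator $P^+$, necessity by testing against $\mu^{-q/p}\chi_{D\cap\U}$, sufficiency via a weighted bound for $P^+$) is the standard Bekoll\'e--Bonami strategy that Lanzani and Stein adapt, and as a roadmap it is the right one.

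As a proof, however, it has two gaps, one acknowledged and one not. The unacknowledged one is in the necessity step. To kill the oscillation of $(z-\bar w)^{-2}$ you must evaluate $\Bp_{\U}f$ on a box displaced well away from $D\cap\U$ (with $\Im z$ a large multiple of $R$; at height $\Im z\approx R$ the argument of $(z-\bar w)^{-2}$ still sweeps out more than a half-turn as $w$ ranges over $D\cap\U$, so cancellation is not excluded). The testing inequality then yields
\[
\Bigl(\frac{1}{R^2}\int_{\mathrm{box}}\mu\Bigr)\cdot\Bigl(\frac{1}{R^2}\int_{D\cap\U}\mu^{-q/p}\Bigr)^{p-1}\le C
\]
with the two averages taken over \emph{disjoint} regions; this is not \eqref{A}, and since the regions are disjoint no covering argument converts one into the other. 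An extra step is needed --- e.g.\ exploiting self-adjointness to run the dual test on $L^q(\U,\mu^{-q/p})$ and combining the two displaced inequalities via H\"older, or choosing test and evaluation regions on which the real part of the kernel is single-signed and bounded below --- so the claim that one obtains ``exactly the bound $N_D(\mu)\le C$'' does not follow as written. The second gap you acknowledge yourself: the whole sufficiency direction is reduced to either a sparse/stopping-time domination of $P^+$ or the openness property $\mu\in A_p^+\Rightarrow\mu\in A_{p-\e}^+$, neither of which is carried out; since that is where essentially all of the work in \cite{LanSte04} lies, the proposal should be read as a correct plan rather than a proof.
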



\section{Main Theorem}
\label{S:main}

Let $\D^n\subset\C^n$ be the polydisk and let $\Omega\subset\C^n$ be a bounded domain. Assume that $\Phi: \D^n\to\Omega$ is a surjective proper rational holomorphic mapping. Then $\Phi$ is a ramified covering map of finite order and each component of $\Phi$ is a rational function whose denominator is nonzero. We will show that the $p$-range for the $L^p$-boundedness of the Bergman projection $\Bp_{\Omega}$ never degenerates to just $p=2$.

\begin{theorem}
\label{maintheorem} \sl
The Bergman projection $\Bp_{\Omega}$ on $\Omega$ is $L^p(\Omega)$-bounded for $p\in(r,r')$, where $r<2$ and $r'>2$ are two conjugate exponents depending on the ramified rational covering.
\end{theorem}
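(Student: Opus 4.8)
The plan is to transfer the problem from $\Omega$ to the polydisk and then to the product of upper half planes, where the one-variable theory of Lanzani--Stein (Theorem \ref{LS04prop4.5}), together with the factorization Lemma \ref{ha} and Propositions \ref{mu1}--\ref{mu2}, applies. Write $u=\det\Phi'$ for the holomorphic Jacobian of the covering $\Phi:\D^n\to\Omega$, a rational function not identically zero. Bell's transformation formula from \cite{Bell81} gives, for $g\in A^2(\Omega)$,
\[
u\cdot(\Bp_\Omega g)\circ\Phi=\Bp_{\D^n}\big(u\cdot(g\circ\Phi)\big).
\]
Using the area formula for the $m$-to-one proper map $\Phi$ (so that $\int_{\D^n}(h\circ\Phi)|u|^2\,\rmd V=m\int_\Omega h\,\rmd V$) and substituting $F=u\cdot(g\circ\Phi)$, the $L^p(\Omega)$ boundedness of $\Bp_\Omega$ becomes equivalent to
\[
\int_{\D^n}\big|\Bp_{\D^n}F\big|^p\,|u|^{2-p}\,\rmd V\lesssim\int_{\D^n}|F|^p\,|u|^{2-p}\,\rmd V,
\]
that is, to the boundedness of $\Bp_{\D^n}$ on $L^p(\D^n,|u|^{2-p})$. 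Composing with the coordinatewise Cayley transform $C:\U^n\to\D^n$ and applying the biholomorphic case of the same formula converts this into the boundedness of $\Bp_{\U^n}$ on $L^p(\U^n,|v|^{2-p})$, where $v=\det(\Phi\circ C)'$ is again rational. This is the weighted inequality one must prove.

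Next I would exploit the product structure. Since $\Bp_{\U^n}=\Bp_\U^{(1)}\circ\cdots\circ\Bp_\U^{(n)}$ is the composition of the one-variable Bergman projections in the separate coordinates, it suffices---by iterating Fubini and applying the one-variable bound successively---to show that for each $j$ the slice weight $w_j\mapsto|v(\cdot,w_j,\cdot)|^{2-p}$ lies in $A_p^+(\U)$ with a bound uniform in the frozen variables. Theorem \ref{LS04prop4.5} is then applied in the $j$-th variable at each step, and the uniformity in the remaining variables is exactly what makes the iteration produce a finite constant.

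It remains to verify this one-variable membership, and here rationality enters. Freezing the other variables, $v$ is a rational function of $w_j$ alone, so by the fundamental theorem of algebra it factors as $c\prod_k(w_j-a_k)\big/\prod_l(w_j-b_l)$ with $c,a_k,b_l$ depending on the frozen variables. Hence
\[
|v|^{2-p}=|c|^{2-p}\prod_k|w_j-a_k|^{2-p}\prod_l|w_j-b_l|^{-(2-p)},
\]
up to multiplicities, and the scalar factor $|c|^{2-p}$ is harmless by Lemma \ref{coeff}. I would then write $|v|^{2-p}$ as a weighted geometric mean $\prod_i\mu_i^{\lambda_i}$ with $\lambda_i\ge0$, $\sum_i\lambda_i=1$ (padding with the trivial weight $\mu_0\equiv1\in A_p^+$ when convenient), where each nontrivial $\mu_i$ is of the form $|w_j-a_k|^{\alpha(2-p)/\theta}$ of Proposition \ref{mu1} (from a zero of $v$) or $|w_j-b_l|^{-\beta(2-p)/\sigma}$ of Proposition \ref{mu2} (from a pole). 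By those propositions each $\mu_i\in A_p^+(\U)$ for $p$ in an interval with conjugate endpoints straddling $2$, and, \emph{crucially}, with a bound independent of the location of the zero or pole, hence uniform in the frozen variables. The iterated form of Lemma \ref{ha} then yields $|v|^{2-p}\in A_p^+(\U)$ in the $w_j$ variable for all $p$ in the intersection of these finitely many intervals. Each interval is symmetric under $p\mapsto p/(p-1)$ and contains $2$, so their intersection is a nonempty interval $(r,r')$ with $r<2<r'$ and $r'=r/(r-1)$; intersecting over $j=1,\dots,n$ preserves this conjugate symmetry and gives the asserted range.

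The main obstacle is this last step: arranging the geometric-mean decomposition so that the resulting $p$-intervals are nondegenerate and overlap. This is controlled by the multiplicities and the number of zeros and poles of $v$ in each variable (bounded by the degree of the rational covering), while the uniformity of the constants over the frozen variables rests entirely on the location-independent bounds recorded in Lemma \ref{coeff} and Propositions \ref{mu1}--\ref{mu2}. The optimal interval for a particular $\Omega$ then reduces to computing these degrees and optimizing the exponents $\theta,\sigma$, as is done for the symmetrized polydisk and the Hartogs triangle.
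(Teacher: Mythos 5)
Your proposal is correct and follows essentially the same route as the paper: Bell's transformation formula to pull back to $\D^n$, the Cayley transform to $\U^n$, the tensor-product structure of $\Bp_{\U^n}$ reducing to slice-wise $A_p^+(\U)$ membership with uniform constants, and then the fundamental theorem of algebra plus Lemmas \ref{coeff} and \ref{ha} and Propositions \ref{mu1}--\ref{mu2} to handle each linear factor, with the conjugate symmetry of the resulting intervals guaranteeing a nonempty range around $p=2$. The only detail worth noting is that, since the pole multiplicities $\beta_l$ are positive integers, the paper explicitly chooses $\sigma_l\in(0,1/2)$ to satisfy the hypothesis $\beta>2\sigma$ of Proposition \ref{mu2}, a point your decomposition should record.
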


\begin{proof}
By \cite[Theorem 1]{Bell81}, the Bergman projections transform in the following form
\[
\Bp_{\D^n}(J_{\C}\Phi\cdot(h\circ\Phi))=J_{\C}\Phi\cdot(\Bp_{\Omega}(h)\circ\Phi)\qquad\text{for\,\,\,} h\in L^2(\Omega),
\]
where $J_{\C}\Phi$ is the complex Jacobian determinant of $\Phi$. So, to prove the $L^p$-estimate of $\Bp_{\Omega}$,
\[
\|\Bp_{\Omega}(h)\|_{L^p(\Omega)}\le C\|h\|_{L^p(\Omega)}\qquad\text{for\,\,\,} h\in L^2(\Omega)\cap L^p(\Omega),
\]
it is equivalent to show that
\begin{equation}
\label{pullbackonD}
\int_{\D^n}\left|\Bp_{\D^n}\Big(J_{\C}\Phi\cdot\big(h\circ\Phi\big)\Big)\cdot\Big(J_{\C}\Phi\Big)^{-1}\right|^p\cdot\left|J_{\C}\Phi\right|^2\rmd V\le C_p\int_{\D^n}|h\circ\Phi|^p|J_{\C}\Phi|^2\rmd V,
\end{equation}
where $\rmd V$ is the standard Euclidean volume measure. Let $g=J_{\C}\Phi\cdot(h\circ\Phi)$. To prove \eqref{pullbackonD}, it suffices to show that
\begin{equation}
\label{weightedLponDn}
\int_{\D^n}\left|\Bp_{\D^n}(g)\right|^p\cdot\left|J_{\C}\Phi\right|^{2-p}\rmd V\le C_p\int_{\D^n}|g|^p\cdot|J_{\C}\Phi|^{2-p}\rmd V
\end{equation}
for $g\in L^p(\D^n,|J_{\C}\Phi|^{2-p})$, the $L^p$ space on $\D^n$ with weight $|J_{\C}\Phi|^{2-p}$.

Consider the Cayley transform $\psi:\U\to\D$ given by
\[
\psi(z)=\frac{i-z}{i+z},
\]
where $z\in\U=\{z\in\C\,:\,\Im(z)>0\}$. Let $\Psi=\otimes_{j=1}^n\psi:\U^n\to\D^n$ be the biholomorphism. 
Apply the Bergman projections transform \cite[Theorem 1]{Bell81} to $\Psi:\U^n\to\D^n$ and pull back from $\D^n$ to $\U^n$ as in \eqref{pullbackonD}. Let $f=J_{\C}\Psi\cdot(g\circ\Psi)$. To prove \eqref{weightedLponDn}, it suffices to show
that
\begin{equation}
\label{weightedLponUn}
\int_{\U^n}\left|\Bp_{\U^n}(f)\right|^p\cdot\left|Q\right|^{2-p}\rmd V\le C_p\int_{\U^n}|f|^p\cdot|Q|^{2-p}\rmd V,
\end{equation}
for $f\in L^p(\U^n,|Q|^{2-p})$, where $Q=J_{\C}\Psi\cdot((J_{\C}\Phi)\circ\Psi)$.

Note that $\Bp_{\U^n}=\otimes_{j=1}^n\Bp_{\U}$. Repeatly apply Theorem \ref{LS04prop4.5} $n$ times. To prove \eqref{weightedLponUn}, it suffices to check:
\begin{enumerate}
\item $|Q|^{2-p}$ as a weight in the variable $z_1$ is in $A_p^+(\U)$ with a uniform bound independent of $z_2,\dots,z_n$;
\item $|Q|^{2-p}$ as a weight in the variable $z_2$ is in $A_p^+(\U)$ with a uniform bound independent of $z_1,z_3,\dots,z_n$;
\[
\dots
\]
\item[(n)] $|Q|^{2-p}$ as a weight in the variable $z_n$ is in $A_p^+(\U)$ with a uniform bound independent of $z_1,\dots,z_{n-1}$.
\end{enumerate}
Without loss of generality, it suffices to check (1) above. Namely, for a.e. $z_2,\dots,z_n$, $|Q(\cdot,z_2,\dots,z_n)|^{2-p}\in A_p^+(\U)$ with a uniform bound $C$ independent of $z_2,\dots,z_n$.

Since $\Phi$ and $\Psi$ are rational, so is $Q$. Let $Q(z)=\frac{P_1(z_1,\dots,z_n)}{P_2(z_1,\dots,z_n)}$, where $P_1$ and $P_2$ are polynomials in $z_1,\dots,z_n$. For a.e. $z_2,\dots,z_n\in\U$, consider $P_1$ and $P_2$ as polynomials in $z_1$. By the fundamental theorem of algebra, these polynomials can be written as
\[
P_1(z)=a_0(z_1-a_1)^{\alpha_1}\dots(z_1-a_k)^{\alpha_k}\qquad\text{and}\qquad P_2(z)=b_0(z_1-b_1)^{\beta_1}\dots(z_1-b_l)^{\beta_l},
\]
where $\alpha_1,\dots,\alpha_k,\beta_1,\dots,\beta_l\in\Z^+$ and $a_0,\dots,a_k$, $b_0\dots,b_l$ depend on $z_2,\dots,z_n$ but are independent of $z_1$.

Since $a_0/b_0$ is independent of $z_1$, by Lemma \ref{coeff} and \ref{ha}, it suffices to assume $a_0/b_0=1$ and check
\begin{equation}
\label{holderweight}
|z_1-a_1|^{\frac{\alpha_1(2-p)}{\theta_1}},\dots,|z_1-a_k|^{\frac{\alpha_k(2-p)}{\theta_k}},|z_1-b_1|^{\frac{-\beta_1(2-p)}{\sigma_1}},\dots,|z_1-b_l|^{\frac{-\beta_l(2-p)}{\sigma_l}}\in A^+_p(\U)
\end{equation}
for some $\theta_1,\dots,\theta_k,\sigma_1,\dots,\sigma_l\in(0,1)$ independent of $z_2,\dots,z_n$ with $(\theta_1+\dots+\theta_k)+(\sigma_1+\dots+\sigma_l)=1$. Since $\beta_1,\dots,\beta_l\in\Z^+$, take $\sigma_1,\dots,\sigma_l\in(0,1/2)$. 
By Propositions \ref{mu1} and \ref{mu2}, the condition \eqref{holderweight} holds when
\[
p\in\left(\bigcap_{j=1}^{k}\left(\frac{2\alpha_j+2\theta_j}{\alpha_j+2\theta_j},\frac{2\alpha_j+2\theta_j}{\alpha_j}\right)\right)\bigcap\left(\bigcap_{s=1}^{l}\left(\frac{2\beta_s-2\sigma_s}{\beta_s},\frac{2\beta_s-2\sigma_s}{\beta_s-2\sigma_s}\right)\right)=:I_1.
\]
Note that each interval above contains $2$ and its endpoints are conjugate exponents. Hence $I_1$ is nonempty and write $I_1=(r_1,r'_1)$, where $r_1<2$ and $r'_1>2$ are conjugate exponents.

In a similar fashion, conditions (2)--(n) hold when $p\in I_2,\dots,p\in I_n$, respectively. Here for each $j=2,\dots,n$, $I_j=(r_j,r'_j)$ where $r_j<2$ and $r'_j>2$ are conjugate exponents. Write $I=\cap_{j=1}^nI_j= (r,r')$, where $r<2$ and $r'>2$ are conjugate exponents. Therefore, $\Bp_{\Omega}$ is $L^p(\Omega)$-bounded for $p\in I$.
\end{proof}


\section{Application to Symmetrized Polydisks}
\label{sympoly}

For $w=(w_1,w_2,\dots,w_n)\in\C^n$, we denote the symmetric polynomials by
\begin{align*}
& p_1(w)=\sum_{j=1}^n w_j,\\
& p_2(w)=\sum_{j<k}w_jw_k,\\
& p_3(w)=\sum_{j<k<l}w_jw_kw_l,\\
& \cdots \\
& p_n(w)=w_1w_2\cdots w_n.
\end{align*}

\begin{definition}
The $n$-dimensional symmetrized polydisk is defined by
\[
\Sb^n=\{z=(p_1(w),p_2(w),\dots,p_n(w))\in\C^n\,:\,w\in\D^n\}.
\]
\end{definition}

\begin{proposition}
\label{coveringmapn!}
Let $\Phi_n:\D^n\to\Sb^n$ be the holomorphic mapping defined by
\[
\Phi_n(w)=(p_1(w),p_2(w),\dots,p_n(w)).
\]
Then $\Phi_n$ is a ramified rational proper covering map of order $n!$ with complex Jacobian determinant
\begin{equation}
\label{njacobian}
J_{\C}\Phi_n(w)=\prod_{j<k}(w_j-w_k).
\end{equation}
\end{proposition}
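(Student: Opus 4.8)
The plan is to verify three separate assertions about the map
$\Phi_n(w)=(p_1(w),\dots,p_n(w))$: that it is proper and surjective onto $\Sb^n$, that it is a ramified covering of order $n!$, and that its complex Jacobian determinant is $\prod_{j<k}(w_j-w_k)$. Surjectivity is immediate from the definition of $\Sb^n$ as the image of $\D^n$ under $\Phi_n$, and rationality is clear since each $p_j$ is a polynomial. For properness, I would argue that if a sequence $w^{(\nu)}\in\D^n$ has $\Phi_n(w^{(\nu)})\to z\in\partial\Sb^n$, then the $w^{(\nu)}$ cannot accumulate in the interior; the cleanest route is to note that the $p_j(w)$ are (up to sign) the coefficients of the monic polynomial $\prod_{j=1}^n(t-w_j)$, so controlling the $p_j$ controls the roots $w_j$, and a boundary limit of the symmetric functions forces at least one root to approach $\partial\D$.

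The order of the covering and the ramification structure both come from the same observation. Since $(p_1(w),\dots,p_n(w))$ are precisely the elementary symmetric functions of $w_1,\dots,w_n$, the map $\Phi_n$ factors through the quotient $\D^n/S_n$ by the symmetric group: two points have the same image exactly when they differ by a permutation of coordinates. Thus over a generic point of $\Sb^n$ — one whose preimage consists of $n$ distinct values $w_1,\dots,w_n$ — the fiber has exactly $n!$ elements, giving covering order $n!$. The ramification locus is where two coordinates coincide, i.e. where the fiber degenerates, and this is precisely where the Jacobian should vanish, which is a consistency check on the Jacobian formula.

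For the Jacobian computation, the key step is to recognize the matrix $\left(\partial p_k/\partial w_j\right)_{j,k}$ and relate it to a Vandermonde determinant. The standard identity is that the elementary symmetric functions and power sums are linked, but more directly one can differentiate the generating relation $\prod_{i}(t-w_i)=t^n - p_1 t^{n-1}+p_2 t^{n-2}-\cdots$; differentiating both sides in $w_j$ expresses $\partial p_k/\partial w_j$ in terms of symmetric functions of the remaining variables. A cleaner approach is to use the chain rule against the power-sum coordinates or to invoke the classical fact that the Jacobian of the elementary symmetric polynomials equals, up to sign, the Vandermonde determinant $\prod_{j<k}(w_j-w_k)$. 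I would establish this by row/column operations reducing $(\partial p_k/\partial w_j)$ to Vandermonde form, or by a degree-and-antisymmetry argument: $J_{\C}\Phi_n$ is a polynomial of total degree $\binom{n}{2}$ that is alternating under swapping any $w_j,w_k$ (since swapping two arguments fixes the $p_k$ but reverses the orientation), hence is divisible by each $w_j-w_k$ and therefore by the full Vandermonde product, and a leading-coefficient comparison pins down the constant as $1$.

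I expect the Jacobian determinant identity to be the main obstacle, not because it is deep but because the direct $n\times n$ determinant expansion is unwieldy; the antisymmetry-plus-degree argument sketched above is what makes it tractable and is the step I would present in detail. Properness is a routine compactness argument once the root-coefficient correspondence is in hand, and the order-$n!$ count follows formally from the $S_n$-quotient description.
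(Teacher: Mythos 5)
Your proposal is correct, and the heart of it---showing $J_{\C}\Phi_n$ is divisible by each $w_j-w_k$ (hence by the full Vandermonde product) and matching degrees to conclude $J_{\C}\Phi_n=c\prod_{j<k}(w_j-w_k)$---is exactly the paper's argument. The one genuine divergence is how the constant $c$ is pinned down: the paper runs an induction on $n$, setting $w_{n}=0$ so that the last row of the Jacobian matrix becomes $(0,\dots,0,w_1\cdots w_{n-1})$, expands along that row to reduce to the $(n-1)$-variable case, and invokes the inductive hypothesis; you propose instead a direct leading-coefficient comparison between $\det(\partial p_k/\partial w_j)$ and the Vandermonde product. Both work; the paper's specialization-plus-induction avoids having to identify which monomial to compare and what its coefficient is in the determinant, while your route is self-contained at fixed $n$ if you carry out that comparison explicitly (e.g.\ tracking the coefficient of $w_1^{n-1}w_2^{n-2}\cdots w_{n-1}$). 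On the remaining points you are, if anything, more careful than the paper: the paper disposes of properness with the one-line remark that the $p_j$ are polynomials, whereas your root--coefficient argument (a boundary limit of the elementary symmetric functions forces some root to approach $\partial\D$) is the substantive justification; and the order-$n!$ count via the $S_n$-action on fibers is the same in both.
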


\begin{proof}
Since $p_1,\dots,p_n$ are polynomials, $\Phi_n$ is rational and proper. Note that $\Sb^n=\Phi_n(\D^n)$. As a proper holomorphic surjective mapping, $\Phi_n:\D^n\to\Sb^n$ is a ramified covering. If $\tau_n$ is a permutation on $\{1,2,\dots,n\}$, then
\[
\Phi_n(w_1,\dots,w_n)=\Phi_n(w_{\tau_n(1)},\dots,w_{\tau_n(n)}).
\]
So $\Phi_n$ is of order $n!$.

Next, we prove \eqref{njacobian} by induction on $n$. When $n=1$, \eqref{njacobian} is trivially
\[
J_{\C}\Phi_1(w)=1.
\]
Assume that \eqref{njacobian} holds for $n=m$. We show \eqref{njacobian} holds for $n=m+1$ as well. Note that if $w_j=w_k$ for any $1\le j<k\le m+1$, then $J_{\C}\Phi_{m+1}(w)=0$. So $J_{\C}\Phi_{m+1}(w)$ is divisible by $\prod_{j<k}(w_j-w_k)$. On the other hand, for $j=1,\dots,m+1$ the function $J_{\C}\Phi_n$ is a polynomial in $w_j$ with leading power $m$, which is the same as $\prod_{j<k}(w_j-w_k)$. So
\begin{equation}
\label{m+1jacobian}
J_{\C}\Phi_{m+1}(w)=c\prod_{1\le j<k\le m+1}(w_j-w_k)
\end{equation}
for some constant $c\neq0$.

In \eqref{m+1jacobian}, let $w_{m+1}=0$. The last row of the determinant on the lefthand side of \eqref{m+1jacobian} becomes $(0,\dots,0,w_1\cdots w_m)$. Expanding this row from the determinant gives $w_1\cdots w_mJ_{\C}\Phi_m(w)$. On the other hand, the righthand side of \eqref{m+1jacobian} becomes $cw_1\cdots w_m\prod_{1\le j<k\le m}(w_j-w_k)$. Therefore, \eqref{m+1jacobian} becomes
\[
J_{\C}\Phi_{m}(w)=c\prod_{1\le j<k\le m}(w_j-w_k).
\]
By the inductive hypothesis, $c=1$. So \eqref{njacobian} holds for $n=m+1$. This completes the proof.
\end{proof}

As in the proof of Theorem \ref{maintheorem}, let $\Psi=\otimes_{j=1}^n\psi:\U^n\to\D^n$ be a biholomorphism, where $\psi:\U\to\D$ is the Cayley transform
\[
\psi(z)=\frac{i-z}{i+z}.
\]
Then the Bergman projection $\Bp_{\Sb^n}$ on $\Sb^n$ is $L^p(\Sb^n)$-bounded if \eqref{weightedLponUn} holds with
\[
Q(z)=J_{\C}\Psi(z)\cdot((J_{\C}\Phi_n)\circ\Psi(z))=\frac{c\prod_{j<k}(z_j-z_k)}{\prod_{j=1}^n(i+z_j)^{n+1}}
\]
for some universal constant $c$.

Since $Q$ is symmetric in $z_1,\dots,z_n$, it suffices to check any of conditions (1)-(n) in the proof of 
Theorem \ref{maintheorem}. Without loss of generality, we check (1). As in \eqref{holderweight}, it 
suffices to check that 
\begin{equation}
\label{symdiskholderweight}
|i+z_1|^{\frac{-(n+1)(2-p)}{\theta_1}},|z_1-z_2|^{\frac{2-p}{\theta_2}}, \dots , |z_1-z_n|^{\frac{2-p}{\theta_n}}\in A^+_p(\U)
\end{equation}
with a bound independent of $z_2,\dots,z_n$ for some $\theta_1,\dots,\theta_n\in(0,1)$ with $\theta_1+\cdots+\theta_n=1$.

By Propositions \ref{mu1} and \ref{mu2}, the condition \eqref{symdiskholderweight} holds when
\begin{equation}
\label{ninterval}
p\in\left(\frac{2(n+1)-2\theta_1}{n+1},\frac{2(n+1)-2\theta_1}{n+1-2\theta_1}\right)\bigcap\left(\frac{2+2\theta_2}{1+2\theta_2},2+2\theta_2\right)\bigcap\cdots\bigcap\left(\frac{2+2\theta_n}{1+2\theta_n},2+2\theta_n\right).
\end{equation}
Note that the last $n-1$ intervals are symmetric in  $\theta_2,\dots,\theta_n$. Given $\theta_1\in(0,1)$, the largest possible intersection of these $n-1$ intervals occurs when $\theta:=\theta_2=\cdots=\theta_n$. So \eqref{ninterval} becomes
\begin{equation}
\label{2interval}
p\in\left(\frac{2+2\theta}{1+2\theta},2+2\theta\right)\bigcap\left(\frac{2n+2(n-1)\theta}{n+1},\frac{2n+2(n-1)\theta}{n-1+2(n-1)\theta}\right),
\end{equation}
since $\theta_1+(n-1)\theta=1$. As $\theta$ varies from $0$ to $1$, in \eqref{2interval} the first interval is expanding while the second interval is shrinking. Since the endpoints are conjugate exponents, the largest possible intersection occurs when the two intervals are identical. This is achieved by setting
\[
\theta=\frac{\sqrt{n^2-1}-n+1}{2n-2}
\]
and \eqref{2interval} becomes
\begin{equation}
\label{prangesymdisk}
p\in\left(\frac{\sqrt{n^2-1}+n-1}{\sqrt{n^2-1}},\frac{\sqrt{n^2-1}+n-1}{n-1}\right).
\end{equation}

We summarize what we have proved in the following.

\begin{theorem}
\label{nsymdisk} \sl
The Bergman projection $\Bp_{\Sb^n}$ on the $n$-dimensional symmetrized polydisk $\Sb^n$ is $L^p(\Sb^n)$-bounded if \eqref{prangesymdisk} holds.
\end{theorem}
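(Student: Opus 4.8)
The goal is to establish Theorem \ref{nsymdisk}, which asserts $L^p(\Sb^n)$-boundedness of the Bergman projection for $p$ in the interval \eqref{prangesymdisk}. My plan is to realize $\Sb^n$ as a rational proper holomorphic image of $\D^n$ and then apply the machinery of Theorem \ref{maintheorem} directly, tracking the exponents carefully to extract the sharpest interval. The first step is to invoke Proposition \ref{coveringmapn!}, which already supplies the covering map $\Phi_n:\D^n\to\Sb^n$ of order $n!$ together with the explicit Jacobian $J_{\C}\Phi_n(w)=\prod_{j<k}(w_j-w_k)$. This places us squarely in the setting of Theorem \ref{maintheorem}, so $\Bp_{\Sb^n}$ is automatically $L^p$-bounded on \emph{some} interval $(r,r')$; the real content of Theorem \ref{nsymdisk} is computing the optimal interval the method yields, not merely its nonemptiness.

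Next I would compute the transferred weight $Q$ explicitly. Composing $J_{\C}\Phi_n$ with the product Cayley transform $\Psi=\otimes_{j=1}^n\psi$ and multiplying by $J_{\C}\Psi$, the chain rule and the form $\psi(z)=(i-z)/(i+z)$ give $J_{\C}\Psi(z)=c\prod_j(i+z_j)^{-2}$, and after combining with $((J_{\C}\Phi_n)\circ\Psi)(z)$ one arrives at
\[
Q(z)=\frac{c\prod_{j<k}(z_j-z_k)}{\prod_{j=1}^n(i+z_j)^{n+1}}.
\]
Because $Q$ is symmetric in $z_1,\dots,z_n$, checking condition (1) of Theorem \ref{maintheorem} suffices by symmetry. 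Viewing $|Q|^{2-p}$ as a weight in $z_1$, its factors are $|i+z_1|^{-(n+1)}$ (a denominator factor, treated by Proposition \ref{mu2}) and the $n-1$ numerator factors $|z_1-z_j|$ for $j=2,\dots,n$ (treated by Proposition \ref{mu1}). Splitting the exponent $2-p$ among Hölder weights $\theta_1,\dots,\theta_n$ summing to $1$ via Lemmas \ref{coeff} and \ref{ha}, this reduces the whole problem to membership \eqref{symdiskholderweight} in $A_p^+(\U)$, with bounds uniform in $z_2,\dots,z_n$ as required.

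The final and most delicate step is the optimization over the free parameters $\theta_1,\dots,\theta_n$. Propositions \ref{mu1} and \ref{mu2} convert each factor into an admissible $p$-interval, and $p$ must lie in their intersection \eqref{ninterval}. Here I exploit two structural features: the $n-1$ numerator intervals are symmetric in $\theta_2,\dots,\theta_n$, so for fixed $\theta_1$ the intersection is largest when they are all equal, $\theta_2=\cdots=\theta_n=:\theta$; and since each interval has conjugate-exponent endpoints centered symmetrically about $2$, maximizing the intersection of the two remaining intervals in \eqref{2interval} amounts to making them coincide. Setting the expanding interval equal to the shrinking one and solving for $\theta$ yields $\theta=(\sqrt{n^2-1}-n+1)/(2n-2)$, which upon substitution produces exactly \eqref{prangesymdisk}. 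The main obstacle is this optimization: one must verify that equating the two interval endpoints is genuinely the optimal choice (rather than a boundary value of $\theta$) and that the resulting $\theta$ lies in $(0,1)$ with each $\sigma$-type parameter in $(0,1/2)$ as Proposition \ref{mu2} demands. Once the algebra of solving the endpoint equation is carried out, the interval \eqref{prangesymdisk} follows and the theorem is proved.
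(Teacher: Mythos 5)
Your proposal follows essentially the same route as the paper: invoke Proposition \ref{coveringmapn!}, transfer to $\U^n$ via the Cayley transform to get the explicit weight $Q$, exploit the symmetry of $Q$ to check only condition (1), reduce to \eqref{symdiskholderweight} via Lemmas \ref{coeff} and \ref{ha}, and then optimize the H\"older parameters by setting $\theta_2=\cdots=\theta_n$ and equating the endpoints of the two remaining intervals in \eqref{2interval}. The only cosmetic slip is your remark that the $\sigma$-type parameter must lie in $(0,1/2)$; Proposition \ref{mu2} only requires $\beta>2\sigma$, which here reads $n+1>2\theta_1$ and holds automatically for any $\theta_1\in(0,1)$.
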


In particular, when $n=2$, the classical symmetrized bidisk
\[
\Sb:=\Sb^2=\{(z_1+z_2,z_1z_2)\in\C^2\,|\,(z_1,z_2)\in\D\times\D\}
\]
is of particular interest in the geometric function theory (cf. \cite{AglLykYou18, AglYou00, AglYou04}).

\begin{corollary}
The Bergman projection $\Bp_\Sb$ is $L^p(\Sb)$-bounded for $p \in \left(\frac{\sqrt{3}+1}{\sqrt{3}},\sqrt{3}+1 \right)$.
\end{corollary}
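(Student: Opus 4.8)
The plan is to obtain this corollary as the immediate specialization of Theorem \ref{nsymdisk} to the case $n=2$. First I would observe that the symmetrized bidisk $\Sb=\Sb^2$ fits the general setup: by Proposition \ref{coveringmapn!} with $n=2$, the symmetrization map $\Phi_2:\D^2\to\Sb^2$ given by $\Phi_2(w_1,w_2)=(w_1+w_2,w_1w_2)$ is a ramified rational proper covering of order $2$ with complex Jacobian determinant $J_\C\Phi_2(w)=w_1-w_2$. Hence Theorem \ref{nsymdisk} applies verbatim, and it remains only to evaluate the endpoints of the interval \eqref{prangesymdisk} at $n=2$.

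Next I would carry out this substitution. For $n=2$ one has $\sqrt{n^2-1}=\sqrt{3}$ and $n-1=1$, so the left endpoint of \eqref{prangesymdisk} is
\[
\frac{\sqrt{n^2-1}+n-1}{\sqrt{n^2-1}}=\frac{\sqrt{3}+1}{\sqrt{3}}
\]
and the right endpoint is
\[
\frac{\sqrt{n^2-1}+n-1}{n-1}=\frac{\sqrt{3}+1}{1}=\sqrt{3}+1.
\]
This reproduces precisely the claimed range $p\in\left(\frac{\sqrt{3}+1}{\sqrt{3}},\sqrt{3}+1\right)$, whence the $L^p(\Sb)$-boundedness of $\Bp_\Sb$ on this interval follows.

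Since every step is merely the evaluation of a closed-form expression already established in Theorem \ref{nsymdisk}, there is no substantive obstacle here. The only point worth confirming is that the two endpoints are genuinely conjugate exponents, consistent with the general structure of these intervals. To verify this, set $p=\frac{\sqrt{3}+1}{\sqrt{3}}$; then $p-1=\frac{1}{\sqrt{3}}$, so the conjugate exponent is $\frac{p}{p-1}=\sqrt{3}+1$, which matches the right endpoint exactly. This confirms that the interval is centered, in the conjugate-exponent sense, about $p=2$, as expected.
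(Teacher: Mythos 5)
Your proposal is correct and matches the paper's own treatment exactly: the paper derives this corollary as the immediate specialization of Theorem \ref{nsymdisk} (i.e., of the interval \eqref{prangesymdisk}) to $n=2$, and your substitution $\sqrt{n^2-1}=\sqrt{3}$, $n-1=1$ reproduces the stated endpoints. The extra check that the endpoints are conjugate exponents is a nice sanity confirmation but adds nothing beyond what the paper already establishes.
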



\section{Application to Hartogs Triangles}
\label{Hartogstriangle}

For $\gamma\in\R^+$, let
\[
\Ht^{\gamma}=\{(z_1,z_2)\in\C^2\,:\,|z_1|^{\gamma}<|z_2|<1\}
\]
be the Hartogs triangle with exponent $\gamma$. Since the Bergman space $A^2(\D)$ is the same as $A^2(\D^*)$, the result in \S\ref{S:main} applies to any domain $\Omega\subset\C^2$ with a rational proper covering mapping $\Phi:\D\times\D^*\to\Omega$.

When $\gamma\in\Q^+$, let $\gamma=\frac{m}{n}$ for some $m,n\in\Z^+$ with $\text{gcd}(m,n)=1$. The holomorphic mapping $\Phi:\D\times\D^*\to\Ht^{m/n}$ given by
\[
\Phi(w_1,w_2)=(w_1w_2^n,w_2^m)\qquad\text{for\,\,\,\,\,}(w_1,w_2)\in\D\times\D^*
\]
is a rational proper covering map.
\begin{corollary}   \sl
The Bergman projection $\Bp_{\Ht^{m/n}}$ is $L^p(\Ht^{m/n})$-bounded for $p\in(r,r')$, where $r<2$ and $r'>2$ are conjugate exponents.
\end{corollary}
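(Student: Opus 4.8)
The plan is to apply Theorem \ref{maintheorem} directly to the covering map $\Phi(w_1,w_2)=(w_1w_2^n,w_2^m)$, treating $\Ht^{m/n}$ as the base domain $\Omega$ and $\D\times\D^*$ as the covering polydisk (recall that $A^2(\D)=A^2(\D^*)$, so the entire machinery of \S\ref{S:main} transfers verbatim). The only genuine content beyond invoking the theorem is to verify that $\Phi$ fits the hypotheses and then to exhibit an explicit weight of the form handled by Propositions \ref{mu1} and \ref{mu2}. First I would confirm that $\Phi$ is a rational proper surjective holomorphic map of finite order onto $\Ht^{m/n}$: surjectivity and properness follow from the defining inequality $|z_1|^{m/n}<|z_2|<1$ by solving $|w_1w_2^n|^{m/n}=|w_1|^{m/n}|w_2|^m<|w_2^m|=|w_2|^m$, which reduces to $|w_1|<1$, and the order is $m$ since the $m$-th roots of unity acting on $w_2$ (with a compensating adjustment on $w_1$) give the fiber.

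Next I would compute the complex Jacobian determinant. Writing $\Phi=(w_1w_2^n,\,w_2^m)$, the Jacobian matrix is $\begin{pmatrix} w_2^n & nw_1w_2^{n-1}\\ 0 & mw_2^{m-1}\end{pmatrix}$, so $J_{\C}\Phi(w_1,w_2)=m\,w_2^{n+m-1}$, which is nonzero on $\D\times\D^*$ as required. The weight appearing in \eqref{weightedLponUn} is $|Q|^{2-p}$ where $Q=J_{\C}\Psi\cdot((J_{\C}\Phi)\circ\Psi)$. After pulling back by the Cayley transform $\Psi=\psi\otimes\psi$, the factor $w_2^{n+m-1}$ becomes a power of $\psi(z_2)=(i-z_2)/(i+z_2)$, contributing factors of the form $|i-z_2|^{s}$ and $|i+z_2|^{-s}$, together with the Jacobian factors of $\Psi$ itself, which are again powers of $(i+z_j)$. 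All of these are powers of $|z-w|$ with $w\in\{i,-i\}\subset\C$, precisely the shape treated in Propositions \ref{mu1} and \ref{mu2}.

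The remaining step is to check conditions (1) and (2) of Theorem \ref{maintheorem} separately. In the variable $z_1$ the weight is constant (since $J_{\C}\Phi$ does not depend on $w_1$ and neither does the $z_1$-component of $J_{\C}\Psi$ beyond a single $(i+z_1)$ power), so condition (1) is handled by a single application of Proposition \ref{mu2} (or \ref{mu1}) to a lone factor $|i+z_1|^{\text{(power)}(2-p)}$, yielding an interval $I_1=(r_1,r_1')$ of conjugate exponents straddling $2$. In the variable $z_2$ the weight is a product of finitely many factors $|i\pm z_2|^{c(2-p)}$ with fixed rational exponents determined by $m$ and $n$; I would split the exponent via a Hölder partition $\theta_1+\cdots=1$ exactly as in \eqref{holderweight} and apply Lemma \ref{ha} together with Propositions \ref{mu1} and \ref{mu2} to each factor, obtaining an interval $I_2=(r_2,r_2')$ of conjugate exponents around $2$. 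Setting $(r,r')=I_1\cap I_2$ gives the claimed nonempty interval of conjugate exponents.

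The main obstacle I anticipate is purely bookkeeping rather than conceptual: correctly tracking the exact powers of $(i+z_j)$ and $(i-z_2)$ produced by composing $J_{\C}\Phi$ with $\Psi$ and multiplying by $J_{\C}\Psi$, and then confirming that the resulting exponents satisfy the sign and range constraints ($\alpha>0$ in Proposition \ref{mu1}, $\beta>2\sigma$ in Proposition \ref{mu2}) needed to place each factor in $A_p^+(\U)$ with a bound uniform in the other variable. Since all exponents are explicit rational multiples of $(2-p)$, this is a finite and elementary verification, and the conclusion that $r<2<r'$ with $r,r'$ conjugate follows automatically because every constituent interval in Propositions \ref{mu1} and \ref{mu2} is centered at $p=2$ with conjugate endpoints, a property preserved under intersection.
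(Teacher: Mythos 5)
Your proposal is correct and follows the same route as the paper, which for this corollary simply exhibits the rational proper covering $\Phi(w_1,w_2)=(w_1w_2^n,w_2^m)$ and invokes Theorem \ref{maintheorem}; you additionally supply the verification (surjectivity, properness, order $m$, and $J_{\C}\Phi=m\,w_2^{n+m-1}$) that the paper leaves implicit. The only wrinkle worth noting in your bookkeeping step is that when a variable contributes a single denominator factor, the forced normalization $\sigma_1=1$ falls outside the range allowed in Proposition \ref{mu2}, but this is repaired by a H\"{o}lder split against the constant weight $1$ via Lemma \ref{ha}.
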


\begin{remark} \rm
Edholm and McNeal obtained in \cite{EdhMcN16} the precise nondegenerate interval of $p$ for which the Bergman projection is $L^p$-bounded. 
\end{remark}

When $\gamma$ is irrational, Edholm and McNeal showed that the Bergman projection $\Bp_{\Ht^{\gamma}}$ is $L^p(\Ht^{\gamma})$-bounded only when $p=2$. Their result together with Theorem \ref{maintheorem} implies the following result.

\begin{corollary} \rm
\label{nopropercover}
There is no rational proper covering map from $\D\times\D^*$ to $\Ht^{\gamma}$ when $\gamma$ is irrational.
\end{corollary}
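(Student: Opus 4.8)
The plan is to argue by contradiction, pitting Theorem \ref{maintheorem} against the rigidity theorem of Edholm and McNeal from \cite{EdhMcN17}. Suppose that for some irrational $\gamma$ there did exist a rational proper holomorphic covering map $\Phi:\D\times\D^*\to\Ht^{\gamma}$. I would then apply the main theorem to produce a nondegenerate $p$-range for $\Bp_{\Ht^{\gamma}}$, and observe that this flatly contradicts the fact that for irrational $\gamma$ the only admissible exponent is $p=2$.

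The first step is to check that the entire argument of Theorem \ref{maintheorem} applies verbatim with $\D\times\D^*$ in place of $\D^2$. The only feature of the covering domain used in that proof is the product structure of its Bergman projection, namely $\Bp_{\D^2}=\otimes_{j=1}^2\Bp_{\D}$, which is pulled back through the Cayley transforms to $\otimes_{j=1}^2\Bp_{\U}$. Since the puncture $\{0\}$ is a set of measure zero, Riemann's removable singularity theorem gives $A^2(\D^*)=A^2(\D)$, hence $\Bp_{\D^*}=\Bp_{\D}$ and therefore $\Bp_{\D\times\D^*}=\Bp_{\D^2}$; moreover the map $\psi:\U\to\D$ hits the puncture only at the single point $\psi^{-1}(0)$, again a null set, so every $L^p$-integral over $\U^2$ is unaffected. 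Consequently the pull-back to $\U^2$, the reduction to the weighted inequality \eqref{weightedLponUn}, and the verification via the $A_p^+$-class all carry over without change, and Theorem \ref{maintheorem} yields boundedness of $\Bp_{\Ht^{\gamma}}$ on $L^p(\Ht^{\gamma})$ for every $p$ in some interval $(r,r')$ with $r<2<r'$.

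The final step is to invoke \cite{EdhMcN17}, which asserts that when $\gamma$ is irrational, $\Bp_{\Ht^{\gamma}}$ is $L^p(\Ht^{\gamma})$-bounded \emph{only} for $p=2$. Since the interval $(r,r')$ produced above contains points $p\neq 2$, the two assertions are incompatible, and the assumed map $\Phi$ cannot exist. The argument is otherwise a one-line contradiction; the only genuinely delicate point, and hence the step I expect to require the most care, is the transfer in the second paragraph---confirming that substituting the punctured disk $\D^*$ for one factor of $\D$ disturbs neither the Bergman projection nor any $L^p$-estimate in the proof of Theorem \ref{maintheorem}. This is exactly the equality $A^2(\D)=A^2(\D^*)$ recorded at the start of this section, so the main theorem transfers without loss and the corollary follows.
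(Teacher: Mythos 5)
Your proposal is correct and follows exactly the paper's route: the paper likewise derives the corollary as an immediate contradiction between Theorem \ref{maintheorem} (which would give a nondegenerate $p$-interval if such a covering existed) and the Edholm--McNeal rigidity result that only $p=2$ works for irrational $\gamma$, and it justifies applying the main theorem to coverings from $\D\times\D^*$ via the same observation $A^2(\D)=A^2(\D^*)$ stated at the start of \S\ref{Hartogstriangle}. Your second paragraph simply spells out in more detail a transfer step the paper takes for granted.
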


\begin{remark} \rm
This geometric property of $\Ht^{\gamma}$ is obtained by an analytic method. Namely, the geometric mapping properties of Hartogs triangles with rational and irrational exponent are significantly different.
\end{remark}

This idea can be applied to higher dimensional domains as well.

\begin{corollary}   \sl
For any bounded domain $\Omega\subset\C^n$, if its Bergman projection $\Bp_{\Omega}$ is $L^p(\Omega)$-bounded only when $p=2$, then it cannot be covered by $\D^n$ through a rational proper holomorphic map.
\end{corollary}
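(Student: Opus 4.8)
The plan is to obtain this corollary as the logical contrapositive of Theorem \ref{maintheorem}, so essentially no new analysis is required: all the work has already been done in the main theorem. First I would argue by contradiction. Suppose $\Omega \subset \C^n$ is a bounded domain whose Bergman projection $\Bp_{\Omega}$ is $L^p(\Omega)$-bounded only when $p = 2$, yet suppose for contradiction that $\Omega$ \emph{can} be covered by $\D^n$ through a rational proper holomorphic map $\Phi \colon \D^n \to \Omega$. Then $\Phi$ satisfies exactly the hypotheses of Theorem \ref{maintheorem}, which therefore applies verbatim and produces two conjugate exponents $r < 2$ and $r' > 2$ such that $\Bp_{\Omega}$ is $L^p(\Omega)$-bounded for every $p \in (r, r')$.

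The key point to extract is the \emph{strict} nondegeneracy of this interval. Because $r < 2 < r'$, the open interval $(r, r')$ contains a full neighborhood of $2$; in particular it contains some exponent $p_0 \neq 2$ for which $\Bp_{\Omega}$ is $L^{p_0}(\Omega)$-bounded. This directly contradicts the standing hypothesis that boundedness can occur only at $p = 2$. Hence no such covering map $\Phi$ can exist, which is the assertion of the corollary.

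I do not expect any genuine obstacle here, since the entire substance is encoded in Theorem \ref{maintheorem}. The only feature one must be careful to invoke is that the endpoints produced by that theorem are always conjugate exponents straddling $2$ (rather than merely an interval that might collapse to $\{2\}$); this is guaranteed because each factor of the weight contributes an open interval containing $2$ via Propositions \ref{mu1} and \ref{mu2}, and a finite intersection of such intervals with conjugate-exponent endpoints containing $2$ is again a nondegenerate interval of the same shape, as already recorded in the proof of Theorem \ref{maintheorem}. Thus the nondegeneracy needed for the contradiction is automatic, and the corollary follows immediately.
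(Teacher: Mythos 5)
Your proposal is correct and matches the paper's intent exactly: the paper states this corollary without a written proof, treating it as the immediate contrapositive of Theorem \ref{maintheorem}, and your argument spells out precisely that contrapositive, including the key point that the interval $(r,r')$ produced by the theorem is strictly nondegenerate about $2$. No gaps.
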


\begin{remark}	\rm
There are examples in $\C^2$ in \cite{ChenZey, Zey13} other than $\Ht^{\gamma}$ mentioned above.
\end{remark}


\section{The Friedrichs operator on $\Sb$}
\label{Friedrichs}

\subsection{The Pull-Back Bergman Space}
\label{branchcover}
Let
\[
\Sb:=\Sb^2=\{(z_1+z_2,z_1z_2)\in\C^2\,|\,(z_1,z_2)\in\D\times\D\}
\]
be the symmetrized bidisk in $\C^2$. By Proposition \ref{coveringmapn!},
\[
\Phi: \D^2\to\Sb
\]
where $\Phi(z):=\Phi_2(z_1,z_2)=(z_1+z_2,z_1z_2)$ is a rational proper covering map of order $2$ with Jacobian determinant $J_{\C}\Phi(z)=z_1-z_2$.

Define a symmetrization map on $\D^2$ by
\[
\tau:\D^2\to\D^2
\]
with $\tau(z_1,z_2)=(z_2,z_1)$. A measurable function $f$ on $\D^2$ is called $\tau$-\emph{invariant} if $f\circ\tau=f$. We denote the set of $\tau$-invariant functions by $\inv$.
If $g\in A^2(\Sb)$, then $g\circ\Phi=g\circ\Phi\circ\tau$, so $g\circ\Phi\in\inv$. Let $\nu(z)=|z_1-z_2|^2$ on $\D^2$ and let $A^2(\D^2,\nu)=L^2(\D^2,\nu)\cap\mathcal{O}(\D^2)$ be the weighted Bergman space with norm
\[
\|f\|_{A^2(\D^2,\nu)}:=\left(\int_{\D^2}|f(z)|^2\nu(z)\,\rmd V(z)\right)^{1/2},
\]
where $\rmd V(z)$ is the standard Euclidean volume measure in $z$. By change of variables, if $g\in A^2(\Sb)$, then $g\circ\Phi\in A^2(\D^2,\nu)$ and $\|g\|^2_{L^2(\Sb)} = \frac{1}{2}\|g\circ\Phi\|^2_{L^2(\D^2, \nu)}$. 
On the other hand, if $f\in A^2(\D^2,\nu)\cap\inv$, then $f\circ\Phi^{-1}$ is a well-defined holomorphic function on $\Sb$ 
and thus $f\circ\Phi^{-1}\in A^2(\Sb)$ since
\[
\int_{\Sb}|f\circ\Phi^{-1}|^2\,dV=\frac{1}{2}\int_{\D^2}|f(z)|^2\nu(z)\,dV(z)<\infty.
\]
Therefore there is a $1$-$1$ correspondence between $A^2(\Sb)$ and $A^2(\D^2,\nu)\cap\inv$ through $\Phi$.

For each $f\in A^2(\D^2,\nu)\cap\inv$, let $h(z)=f(z)\cdot(z_1-z_2)$. Then $h\in A^2(\D^2)$ and $h\circ\tau=-h$. Write
\[
h(z_1,z_2)=\sum_{j,k\ge 0}c_{j,k}z^j_1z_2^k.
\] 
It follows from $h\circ\tau=-h$ that $c_{k,j}=-c_{j,k}$ for all $j,k\ge0$. Therefore, $h$ can be written as
\[
h(z_1,z_2)=\sum_{j>k}c_{j,k}\left(z_1^jz_2^k-z_1^kz_2^j\right).
\]
A direct computation shows that
\[
\int_{\D^2}|z_1^jz_2^k-z_1^kz_2^j|^2\,dV(z)=\frac{2\pi^2}{(k+1)(j+1)}.
\]
Hence $\left\{\sqrt{\frac{(k+1)(j+1)}{2\pi^2}}(z_1^jz_2^k-z_1^kz_2^j)\right\}_{j>k}$ is an orthonormal basis for the space $\left\{h \in A^2(\D^2) | h \circ \tau=-h\right\}$, and therefore $\left\{\sqrt{\frac{(k+1)(j+1)}{2\pi^2}}\frac{z_1^jz_2^k-z_1^kz_2^j}{z_1-z_2}\right\}_{j>k}$ is an orthonormal basis for $A^2(\D^2,\nu)\cap\inv$. Another computation shows that the Bergman kernel of the space $A^2(\D^2,\nu)\cap\inv$ is
\begin{equation*}
\begin{split}
B_{\nu}(z_1,z_2,\zeta_1,\zeta_2)
&=\sum_{j>k}\frac{(k+1)(j+1)}{2\pi^2}\cdot\frac{z_1^jz_2^k-z_1^kz_2^j}{z_1-z_2}\cdot\frac{\bar \zeta_1^j\bar\zeta_2^k-\bar\zeta_1^k\bar\zeta_2^j}{\bar\zeta_1-\bar\zeta_2}\\
&=\frac{1}{4\pi^2}\cdot\frac{1}{z_1-z_2}\cdot\frac{1}{\bar\zeta_1-\bar\zeta_2}\sum_{j\neq k}(k+1)(j+1)(z_1^jz_2^k-z_1^kz_2^j)(\bar \zeta_1^j\bar\zeta_2^k-\bar\zeta_1^k\bar\zeta_2^j)\\
&=\frac{1}{4\pi^2}\cdot\frac{1}{z_1-z_2}\cdot\frac{1}{\bar\zeta_1-\bar\zeta_2}\sum_{j,k}(k+1)(j+1)\Big[(z_1\bar\zeta_1)^j(z_2\bar\zeta_2)^k-(z_1\bar\zeta_2)^j(z_2\bar\zeta_1)^k\\
&\qquad\qquad\qquad\qquad\qquad\qquad\qquad -(z_1\bar\zeta_2)^k(z_2\bar\zeta_1)^j+(z_1\bar\zeta_1)^k(z_2\bar\zeta_2)^j \Big]\\
&=\frac{1}{2\pi^2}\cdot\frac{1}{(z_1-z_2)(\bar\zeta_1-\bar\zeta_2)}\left[\frac{1}{(1-z_1\bar\zeta_1)^2(1-z_2\bar\zeta_2)^2}-\frac{1}{(1-z_1\bar\zeta_2)^2(1-z_2\bar\zeta_1)^2}\right].
\end{split}
\end{equation*}

\begin{remark} \rm
Using Bell's result \cite{Bell82c}, one can also obtain  the Bergman kernel of the space $A^2(\D^2,\nu)\cap\inv$. But we will need the computation of the orthonormal basis later.
\end{remark}

\subsection{Mapping Properties of the Friedrichs Operator}
\label{MapProFriedrichs}

Let
\[
\Fo_{\Sb}:A^2(\Sb)\to A^2(\Sb)
\]
be the Friedrichs operator on $\Sb$ defined by $\Fo_{\Sb}(g)=\Bp_{\Sb}(\bar g)$ for $g\in A^2(\Sb)$, where $\bar g$ is the complex conjugate of $g$.

\begin{proposition}
\label{notHartogs}
The symmetrized bidisk is not a Hartogs domain in $\C^2$.
\end{proposition}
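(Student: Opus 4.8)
The plan is to show that $\Sb$ fails to be a Hartogs domain by exhibiting a point of its boundary where the defining geometry is incompatible with the rotational structure that a Hartogs domain must possess. Recall that a Hartogs domain in $\C^2$ is, after a choice of coordinates, invariant under rotations in one of the variables: there is a coordinate, say $z_2$, such that $(z_1,z_2)\in\Sb$ implies $(z_1,e^{i\vartheta}z_2)\in\Sb$ for all real $\vartheta$. The strategy is to rule this out by a direct examination of how circles sit inside $\Sb$. Since $\Sb=\Phi(\D^2)$ with $\Phi(w_1,w_2)=(w_1+w_2,w_1w_2)$, I would first understand $\Sb$ concretely: a point $(s,p)\in\C^2$ lies in $\Sb$ exactly when the quadratic $t^2-st+p$ has both roots in $\D$. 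This gives an explicit, and manifestly non-circular, description of the boundary.

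First I would compute the boundary $\partial\Sb$ using this root condition. A point $(s,p)$ is a boundary point precisely when the quadratic has its roots in $\overline{\D}$ with at least one root of modulus exactly $1$. Writing one root as $e^{i\alpha}$ and the other as $\zeta\in\overline{\D}$ gives the parametrization $s=e^{i\alpha}+\zeta$, $p=e^{i\alpha}\zeta$ of $\partial\Sb$. Next I would suppose toward a contradiction that $\Sb$ is Hartogs, i.e. invariant under $z_2\mapsto e^{i\vartheta}z_2$ in some linear coordinate system. The cleanest route is to test the candidate symmetry against the explicit boundary parametrization: a rotational invariance forces the boundary to contain full orbits of circles, and one checks that the curved, non-symmetric shape of $\partial\Sb$ admits no such foliation. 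Concretely, I would locate a smooth boundary point and compute the real tangent/normal data there, then show the rotation field cannot be everywhere tangent to $\partial\Sb$, which is the obstruction to invariance.

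I expect the main obstacle to be handling the ambiguity in the coordinate system: ``Hartogs domain'' allows an arbitrary choice of linear (or affine) coordinates before demanding circular symmetry in one variable, so the argument must rule out every such choice rather than just the given coordinates. The way I would control this is to use an invariant feature of $\Sb$ rather than a coordinate-dependent one. A natural invariant is the singular locus of $\partial\Sb$: the parametrization degenerates exactly when the quadratic has a double root on the unit circle, i.e. at points $(2e^{i\alpha},e^{2i\alpha})$, which trace out a distinguished curve of singular boundary points. A Hartogs domain's boundary is smooth away from the rotation axis and its singular set must be a union of rotation circles; I would show the singular curve $\{(2e^{i\alpha},e^{2i\alpha})\}$ of $\Sb$ is not a union of such circles under any linear coordinate change, giving the contradiction in a coordinate-free way.

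Finally, I would assemble these pieces into the statement: since no linear change of coordinates turns $\Sb$ into a rotationally invariant domain in one variable---as witnessed by the incompatibility of its singular boundary curve with any circular foliation---the symmetrized bidisk $\Sb$ is not a Hartogs domain in $\C^2$. The payoff, as flagged in the introduction, is that $\Sb$ thereby serves as a domain lacking rotational symmetry whose Friedrichs operator is nonetheless of rank one, which is the content of the subsequent Theorem~\ref{Friedrichsrank1}.
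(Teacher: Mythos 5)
Your proposal is a plan rather than a proof, and the decisive steps are all deferred. The paper's own argument is much more elementary: it works in the given coordinates, notes that $(\pm 2,1)\in\overline{\Sb}$ while $|z_1|\le 2$ on $\overline{\Sb}$ (so any circular action in $z_1$ would have to be centered at $0$), and then checks that rotating $(2,1)$ by $\pi/2$ in $z_1$ would force $(2i,1)\in\overline{\Sb}$, which is impossible since $w_1+w_2=2i$ with $w_1,w_2\in\overline{\D}$ forces $w_1=w_2=i$ and hence $w_1w_2=-1\neq 1$; the symmetric check with $(0,\pm1)$ and $(2,i)$ rules out circularity in $z_2$. Your opening observation --- that $(s,p)\in\Sb$ iff $t^2-st+p$ has both roots in $\D$ --- is correct and could support such a point-check, but you never actually produce a point and a rotation that leave $\overline{\Sb}$.

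The specific gaps: (i) the claim that the non-smooth locus of $\partial\Sb$ is exactly the curve $\{(2e^{i\alpha},e^{2i\alpha})\}$ is asserted without proof and is delicate --- $\partial\Sb$ is a union of analytic disks $\{(e^{i\alpha}+\zeta,e^{i\alpha}\zeta):\zeta\in\overline{\D}\}$ which intersect one another along the symmetrized torus, so the singular set is plausibly two-dimensional, not the one-dimensional curve you name, and a two-dimensional invariant set could in principle be a union of rotation circles; (ii) the supporting claim that ``a Hartogs domain's boundary is smooth away from the rotation axis'' is false (the Hartogs triangle is a counterexample), though the weaker and correct fact that the singular set of the boundary is rotation-invariant would suffice; (iii) the final verification that your curve is not an orbit $\{c+e^{i\theta}v\}$ is doable (match moduli to force $c_1=0$ and then compare the Fourier modes $e^{i\alpha}$ and $e^{2i\alpha}$) but is not carried out. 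Finally, your insistence on quantifying over all linear coordinate changes makes the problem substantially harder than the statement requires: the paper's notion of Hartogs domain is rotational invariance in one of the given coordinates, and it suffices to refute both. As it stands the proposal does not establish the proposition.
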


\begin{proof}
Let $z=(z_1,z_2)\in\Sb$ with $z_1=w_1+w_2$ and $z_2=w_1w_2$ for $w_1,w_2\in\D$. Then $|z_1|\le2$ and $(\pm2,1)\in\overline{\Sb}$ by taking $w_1=w_2=\pm1$. If $\Sb$ is circular in $z_1$, the only possibility is that $z_1$ is symmetric about $0$. If it is the case, then rotating $(2,1)$ in the $z_1$-direction counterclockwise by $\pi/2$ implies $(2i,1)\in\overline{\Sb}$. This is a contradiction, since $w_1+w_2=2i$ forces $w_1=w_2=i$ which gives $z_2=w_1w_2=-1$.

On the other hand, $(0,-1),(0,1)\in\overline{\Sb}$ by taking $w_1=-w_2=1$ and $w_1=-w_2=i$. Also, $|z_2|\le1$. If $\Sb$ is circular in $z_2$, the only possibility is that $z_2$ is symmetric about $0$. If it is the case, then rotating $(2,1)$ in the $z_2$-direction counterclockwisely by $\pi/2$ implies $(2,i)\in\overline{\Sb}$. This is a contradiction, since $w_1+w_2=2$ forces $w_1=w_2=1$ which gives $z_2=w_1w_2=1$. So $\Sb$ is not circular in $z_1$ nor $z_2$. This completes the proof.
\end{proof}

By Proposition \ref{notHartogs}, $\Sb$ is not a Reinhardt domain, nor a Hartogs domain. However, the Friedrichs operator $\Fo_{\Sb}$ is of rank one even if there is a lack of rotational symmetries on $\Sb$.

\begin{theorem}
\label{Friedrichsrank1}
The Friedrichs operator $\Fo_{\Sb}$ on $\Sb$ is of rank one. Moreover, 
\[
\Fo_{\Sb}:A^2(\Sb)\to H^{\infty}(\Sb):=L^{\infty}(\Sb)\cap\mathcal{O}(\Sb)
\]
and there exists $C>0$ such that
$\|\Fo_{\Sb}(g)\|_{L^{\infty}}\le C\|g\|_{L^2}$ for any $g\in A^2(\Sb)$. 
\end{theorem}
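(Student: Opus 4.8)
The plan is to transfer the entire question to the weighted space $A^2(\D^2,\nu)\cap\inv$ studied in \S\ref{branchcover}, where the explicit orthonormal basis is available, and to show that the pulled-back Friedrichs operator annihilates every basis vector except the lowest one. Write $e_{j,k}=c_{j,k}\frac{z_1^jz_2^k-z_1^kz_2^j}{z_1-z_2}$ with $c_{j,k}=\sqrt{\frac{(k+1)(j+1)}{2\pi^2}}$ for the orthonormal basis, and for $g\in A^2(\Sb)$ set $f=g\circ\Phi\in A^2(\D^2,\nu)\cap\inv$. Using the change of variables $\langle u,v\rangle_{L^2(\Sb)}=\tfrac12\langle u\circ\Phi,v\circ\Phi\rangle_{L^2(\D^2,\nu)}$, valid for all $u,v\in L^2(\Sb)$ since the branched cover has order $2$ and Jacobian factor $|J_{\C}\Phi|^2=\nu$, the defining variational property of the Bergman projection pulls back to the identity
\[
\Fo_{\Sb}(g)\circ\Phi=P_\nu(\bar f),
\]
where $P_\nu$ denotes the orthogonal projection of $L^2(\D^2,\nu)$ onto $A^2(\D^2,\nu)\cap\inv$. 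It therefore suffices to understand the anti-linear operator $f\mapsto P_\nu(\bar f)$ on the basis $\{e_{j,k}\}_{j>k}$.

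The core step is to evaluate $\langle\bar e_{m,l},e_{j,k}\rangle_{L^2(\D^2,\nu)}$ for $j>k$ and $m>l$. Writing $s_{j,k}=(z_1^jz_2^k-z_1^kz_2^j)/(z_1-z_2)$ for the underlying symmetric polynomial of degree $j+k-1$, one factor $(\bar z_1-\bar z_2)$ of $\nu=(z_1-z_2)(\bar z_1-\bar z_2)$ cancels the denominators, leaving
\[
\langle\bar e_{m,l},e_{j,k}\rangle_{L^2(\D^2,\nu)}=c_{m,l}c_{j,k}\int_{\D^2}\overline{s_{m,l}}\cdot\overline{(z_1^jz_2^k-z_1^kz_2^j)}\cdot(z_1-z_2)\,\rmd V.
\]
The integrand is an anti-holomorphic expression of total degree $(m+l-1)+(j+k)$ multiplied by the holomorphic factor $(z_1-z_2)$ of degree $1$. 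Since $\int_{\D^2}z^\alpha\bar z^\beta\,\rmd V=0$ unless $\alpha=\beta$, the integral vanishes unless $(m+l-1)+(j+k)=1$, that is $m+l+j+k=2$. As $j,m\ge1$, this forces $(m,l)=(j,k)=(1,0)$, in which case $e_{1,0}=1/\pi$ is constant and a direct computation gives $\langle\bar e_{1,0},e_{1,0}\rangle_{L^2(\D^2,\nu)}=\frac{1}{\pi^2}\int_{\D^2}|z_1-z_2|^2\,\rmd V=1$.

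From this I conclude $P_\nu(\bar e_{m,l})=0$ for $(m,l)\neq(1,0)$ and $P_\nu(\bar e_{1,0})=e_{1,0}$, so the pulled-back operator, and hence $\Fo_{\Sb}$, has rank one with image the constant functions. For the norm statement, expanding $f=\sum_{m>l}b_{m,l}e_{m,l}$ gives $\Fo_{\Sb}(g)\circ\Phi=\overline{b_{1,0}}\,e_{1,0}$ with $b_{1,0}=\langle f,e_{1,0}\rangle_{L^2(\D^2,\nu)}$; thus $\Fo_{\Sb}(g)$ is the constant $\overline{b_{1,0}}/\pi$, which is automatically in $H^\infty(\Sb)$. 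Finally, Cauchy--Schwarz together with $\|e_{1,0}\|_{L^2(\D^2,\nu)}=1$ gives $|b_{1,0}|\le\|f\|_{L^2(\D^2,\nu)}=\sqrt2\,\|g\|_{L^2(\Sb)}$, so
\[
\|\Fo_{\Sb}(g)\|_{L^\infty}\le\frac{\sqrt2}{\pi}\|g\|_{L^2},
\]
which supplies the desired constant $C$.

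The step I expect to be the main obstacle is the vanishing computation in the second paragraph: one must verify that the weight $\nu$ cancels the denominators cleanly and that the surviving holomorphic factor $(z_1-z_2)$ has degree too low to be matched by the anti-holomorphic part unless both pairs of indices sit at the bottom of the basis. Everything else, namely the pull-back identity and the Cauchy--Schwarz bound, is routine once the orthonormal basis of \S\ref{branchcover} is in hand.
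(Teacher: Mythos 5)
Your proof is correct and follows essentially the same route as the paper: both pull the Friedrichs operator back to $A^2(\D^2,\nu)\cap\inv$, expand in the orthonormal basis $\left\{\sqrt{\tfrac{(k+1)(j+1)}{2\pi^2}}\tfrac{z_1^jz_2^k-z_1^kz_2^j}{z_1-z_2}\right\}_{j>k}$, and kill every term except $(j,k)=(1,0)$ using the orthogonality of monomials on $\D^2$. Your degree-count after cancelling one factor of $\nu$ is just a cleaner packaging of the paper's ``rotational symmetry'' argument applied to the kernel expansion, and your explicit constant $C=\sqrt{2}/\pi$ is consistent with the paper's conclusion $\Fo_{\Sb}(g)=\overline{g(0)}$.
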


\begin{proof}
By the $1$-$1$ correspondence between $A^2(\Sb)$ and $A^2(\D^2,\nu)\cap\inv$, it suffices to look at the Friedrichs operator $\Fo_{\nu}$ on $A^2(\D^2,\nu)\cap\inv$. In \S \ref{branchcover}, it is shown that $\left\{\sqrt{\frac{(k+1)(j+1)}{2\pi^2}}\frac{z_1^jz_2^k-z_1^kz_2^j}{z_1-z_2}\right\}_{j>k}$ is an orthonormal basis for the Bergman space $A^2(\D^2,\nu)\cap\inv$. So, for $f\in A^2(\D^2,\nu)\cap\inv$, write
\[
f(z)=\sum_{m>n}a_{m,n}\frac{z_1^mz_2^n-z_1^nz_2^m}{z_1-z_2}.
\]
Note that the Bergman kernel of $A^2(\D^2,\nu)\cap\inv$ has the form
\[
B_{\nu}(z_1,z_2,\zeta_1,\zeta_2)=\sum_{j>k}\frac{(k+1)(j+1)}{2\pi^2}\cdot\frac{z_1^jz_2^k-z_1^kz_2^j}{z_1-z_2}\cdot\frac{\bar \zeta_1^j\bar\zeta_2^k-\bar\zeta_1^k\bar\zeta_2^j}{\bar\zeta_1-\bar\zeta_2}.
\]
So, by definition,
\begin{equation}
\label{expandF}
\begin{split}
\Fo_{\nu}(f)(z)
&=\int_{\D^2}B_{\nu}(z,\zeta)\overline{f(\zeta)}\nu(\zeta)\,\rmd V(\zeta)\\
&=\int_{\D^2}\sum_{j>k}\frac{(k+1)(j+1)}{2\pi^2}\cdot\frac{z_1^jz_2^k-z_1^kz_2^j}{z_1-z_2}\cdot\frac{\bar \zeta_1^j\bar\zeta_2^k-\bar\zeta_1^k\bar\zeta_2^j}{\bar\zeta_1-\bar\zeta_2}\sum_{m>n}\bar a_{m,n}\frac{\bar\zeta_1^m\bar\zeta_2^n-\bar\zeta_1^n\bar\zeta_2^m}{\bar\zeta_1-\bar\zeta_2}\nu(\zeta)\,\rmd V(\zeta)\\
&=\sum_{j>k,m>n}\frac{(k+1)(j+1)\bar a_{m,n}}{2\pi^2}\cdot\frac{z_1^jz_2^k-z_1^kz_2^j}{z_1-z_2}\cdot\\
&\qquad\qquad\qquad\qquad\int_{\D^2}(\bar\zeta_1\bar\zeta_2)^{k+n}\cdot\frac{(\bar\zeta_1^{j-k}-\bar\zeta_2^{j-k})(\bar\zeta_1^{m-n}-\bar\zeta_2^{m-n})}{(\bar\zeta_1-\bar\zeta_2)(\bar\zeta_1-\bar\zeta_2)}\nu(\zeta)\,\rmd V(\zeta).
\end{split}
\end{equation}
Note that, if $j>k$ and $m>n$, the term
\[
\frac{(\bar\zeta_1^{j-k}-\bar\zeta_2^{j-k})(\bar\zeta_1^{m-n}-\bar\zeta_2^{m-n})}{(\bar\zeta_1-\bar\zeta_2)(\bar\zeta_1-\bar\zeta_2)}
\]
in the integrand must be a polynomial in $\bar\zeta_1$ and $\bar\zeta_2$. On the other hand, expand the norm square of the weight function
\[
\nu(\zeta)=|\zeta_1-\zeta_2|^2=|\zeta_1|^2-\bar\zeta_1\zeta_2-\zeta_1\bar\zeta_2+|\zeta_2|^2.
\]
Therefore, by rotational symmetry of integration on $\D^2$, the only surviving term in \eqref{expandF} will be $k=n=0$ and $j=m=1$. Therefore
\begin{equation*}
\Fo_{\nu}(f)(z)=\frac{\bar a_{1,0}}{\pi^2}\int_{\D^2}\left( |\zeta_1|^2+|\zeta_2|^2\right)dV(\zeta)=\bar a_{1,0}.
\end{equation*}
This shows that $\Fo_{\nu}$ is of rank one. Hence the same property holds for $\Fo_{\Sb}$.

Note that $\bar a_{1,0}=\overline{f(0)}$. By the $1$-$1$ correspondence between $A^2(\Sb)$ and $A^2(\D^2,\nu)\cap\inv$,
\begin{equation}
\label{rankone}
\Fo_{\Sb}(g)(z)=\overline{g(0)}
\end{equation}
for $g\in A^2(\Sb)$. By definition, it is always true that $\|\Fo_{\Sb}(g)\|_{L^2}\le\|g\|_{L^2}$. So \eqref{rankone} gives
\[
\left(\int_{\Sb}|\overline{g(0)}|^2\,dV(\zeta)\right)^{1/2}=\|\Fo_{\Sb}(g)\|_{L^2}\le\|g\|_{L^2},
\]
which implies
\[
\|\Fo_{\Sb}(g)\|_{L^{\infty}}=|g(0)|\le C\|g\|_{L^2}
\]
for some $C>0$ depending only on $\Sb$.
\end{proof}

\section{Concluding Remarks}

The symmetrized polydisk is a relatively new domain of 
study.  It exhibits some remarkable geometric phenomena
and has demonstrated interesting new properties.
The higher-dimensional generalization of this idea looks particularly promising, and we hope to explore
this idea in subsequent papers.


\bibliographystyle{alpha}
\bibliography{references}

\end{document}